\newcommand{\IR}{\mathbb R}
\newcommand{\F}{\mathcal F}
\newcommand{\DD}{\mathcal{D}}
\newcommand{\NN}{\mathbb{N}}
\newcommand{\FF}{\mathcal{F}}
\newcommand{\Nn}{\mathcal{N}}
\newcommand{\supp}{\mathrm{supp}}
\newtheorem{theorem}{Theorem}[section]
\newtheorem{lemma}[theorem]{Lemma}
\newtheorem{corollary}[theorem]{Corollary}
\newtheorem{proposition}[theorem]{Proposition}
\newtheorem{example}[theorem]{Example}
\newtheorem{fact}[theorem]{Fact}
\theoremstyle{definition}
\newtheorem{remark}[theorem]{Remark}
\begin{document}

\title[On topological properties of the weak topology of a Banach space]{On topological properties of the weak topology of a Banach space}

\author{S. Gabriyelyan, J. K{\c{a}}kol, L. Zdomskyy}

\thanks{The first named author was partially supported by ISF grant 441/11}
\address{Department of Mathematics, Ben-Gurion University of the Negev, Beer-Sheva, P.O. 653, Israel}
\email{saak@math.bgu.ac.il}

\thanks{The second named author was  supported by Generalitat Valenciana,
Conselleria d'Educaci\'{o}, Cultura i Esport, Spain, Grant PROMETEO/2013/058.
Part of this  paper  has been prepared when 
he visited the Kurt G\"odel
Research Center for Mathematical Logic at the  University of Vienna in October 2014.}
\address{Faculty of Mathematics and Informatics, A. Mickiewicz University, $61-614$ Pozna{\'n}, Poland}
\email{kakol@amu.edu.pl}

\thanks{The third-listed author was supported by the FWF grant I 1209-N25 as well as by the
Austrian Academy of Sciences  through the APART Program.}
\address{Kurt G\"odel Research Center for Mathematical Logic,
 University of Vienna,  W\"ahrin\-ger Str. 25, 1090 Vienna, Austria}
\email{lyubomyr.zdomskyy@univie.ac.at}
\urladdr{http://www.logic.univie.ac.at/\~{}lzdomsky/}

\subjclass[2000]{Primary 46A03,  54E18; Secondary 54C35, 54E20}

\keywords{weak topology, Banach space, $\aleph$-space, $k$-space, $cs^\ast$-character}

\begin{abstract}
Being motivated by the famous Kaplansky theorem we study various sequential properties
 of a Banach space $E$ and its closed unit ball $B$, both  endowed with the weak topology of $E$.
 We show that $B$ has the Pytkeev property if and only
if $E$ in the norm topology contains no isomorphic copy of $\ell_1$,
while  $E$ has the Pytkeev property if and only if it is finite-dimensional.
 We extend Schl\"{u}chtermann and Wheeler's result from \cite{S-W} by showing
that $B$ is a (separable) metrizable space if and only if it has countable
$cs^\ast$-character and is a $k$-space. As a corollary we obtain that $B$ is Polish
 if and only if it has countable $cs^\ast$-character and is \v{C}ech-complete,
that supplements  a result of Edgar and Wheeler \cite{edgar}.
\end{abstract}


\maketitle

\section{Introduction}

Topological properties of a  locally convex space (lcs) $E$ endowed with the weak topology
$\sigma(E,E')$, denoted by $E_w$ for short, are of  great importance and have been intensively
 studied from many years (see  \cite{kak,bonet} and references therein).
Corson \cite{Corson} started a systematic study of certain topological properties
 of the weak topology of Banach spaces.  It is well known that a Banach space $E$ in
the weak topology is metrizable if and only if $E$ is finite-dimensional. On the other hand,
various topological properties generalizing metrizability have been studied intensively by
 topologists and analysts. Among the others let us mention the
 first countability,  Fr\'{e}chet--Urysohn property, sequentiality, $k$-space property, and countable
tightness (see \cite{Eng,kak}). It is well known that
\[
\xymatrix{
\mbox{metric} \ar@{=>}[r] & \mbox{first}  \atop \mbox{countably} \ar@{=>}[r] & \mbox{Fr\'{e}chet--} \atop \mbox{Urysohn} \ar@{=>}[r] & \mbox{sequential} \ar@{=>}[r] & }
\left\{
\begin{split}
& \mbox{$k$-space} \\
& {\mbox{countable}\mbox{ tight}}
\end{split} \right. ,
\]
and none of these implications can be reversed. Recall the following classical
\begin{theorem}[Kaplansky] \label{kaplansky}
If $E$ is a metrizable lcs,  $E_w$ has countable tight.
\end{theorem}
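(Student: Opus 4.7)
The task is to verify countable tightness of $E_w$ directly from the definition: for every $A\subseteq E$ and every $x\in\overline{A}^w$ one must produce a countable $B\subseteq A$ with $x\in\overline{B}^w$. By translation one may assume $x=0$.

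The natural first move is Mazur's theorem, which is available because $E$ is metrizable: convex sets have identical closure in the weak and in the original topology. Hence $0\in\overline{A}^w\subseteq\overline{\mathrm{co}(A)}^w=\overline{\mathrm{co}(A)}$, and first countability of the original topology furnishes a sequence $(c_k)\subseteq\mathrm{co}(A)$ with $c_k\to 0$. Writing each $c_k$ as a finite convex combination of elements of $A$ and collecting the countably many vectors involved produces a countable $C_0\subseteq A$ satisfying $0\in\overline{\mathrm{co}(C_0)}$.

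This step alone is insufficient, since $0\in\overline{\mathrm{co}(C_0)}^w$ is strictly weaker than $0\in\overline{C_0}^w$. To bridge the gap I would construct countable subsets $C_0\subseteq C_1\subseteq\cdots$ of $A$ inductively. At stage $n$ the closed linear span $E_n:=\overline{\mathrm{span}}(C_n)$ is separable, so its dual ball $B_{E_n'}$ is weak*-metrizable and admits a countable weak*-dense subset $D_n$; lifting $D_n$ by Hahn--Banach yields a countable $\widetilde{D}_n\subseteq E'$. For each finite $\Phi\subseteq\widetilde{D}_n$ and each rational $\e>0$, the hypothesis $0\in\overline{A}^w$ provides some $a_{\Phi,\e}\in A$ in the basic weak neighborhood of $0$ determined by $\Phi$ and $\e$; let $C_{n+1}$ consist of $C_n$ together with all these witnesses, and set $B:=\bigcup_n C_n$, which is countable.

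The principal technical obstacle is to check that $0\in\overline{B}^w$. Given a basic weak neighborhood of $0$ determined by $g_1,\ldots,g_k\in E'$ and $\delta>0$, the natural strategy is to weak*-approximate each restriction $g_j|_{E_n}$ by some $\phi_j^{(n)}\in\widetilde{D}_n$ and then use $b=a_{\{\phi_j^{(n)}\},\e}\in C_{n+1}$, which by construction satisfies $|\langle b,\phi_j^{(n)}\rangle|<\e$, to control $|\langle b,g_j\rangle|$. The delicate point is a genuine circularity: the weak* approximation of $g_j$ by $\phi_j^{(n)}$ has to be good at the very point $b$, while $b$ itself is chosen in terms of those approximants. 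The resolution is a diagonal argument exploiting that each element of $B$ eventually lies in some $E_n$ and that the final span $E_\w:=\overline{\mathrm{span}}(B)$ is separable, so that $\bigcup_n\widetilde{D}_n$ is weak*-dense in $B_{E_\w'}$. The argument is cleanest for Banach $E$; in the general metrizable case one replaces the unit ball of $E'$ by the polars $U_n^\circ$ of a countable base $(U_n)$ of absolutely convex neighborhoods of $0$.
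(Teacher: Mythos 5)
Your overall strategy --- choosing witnesses from $A$ for a countable family of finite sets of functionals obtained by a closing-off construction --- cannot be completed as written, and the place where it breaks is exactly the ``circularity'' you flag at the end. To verify $0\in\overline{B}^w$ you must, given $g_1,\dots,g_k\in E'$ and $\delta>0$, produce $n$ and $\Phi=\{\phi_1,\dots,\phi_k\}\subset\widetilde{D}_n$ whose witness $b=a_{\Phi,\e}$ satisfies $|g_j(b)|<\delta$; since the construction only guarantees $|\phi_j(b)|<\e$, you need $|g_j(b)-\phi_j(b)|<\delta-\e$. Weak$^*$ density of $\bigcup_n\widetilde{D}_n$ only yields a sequence $\Phi^{(m)}$ converging to $(g_1,\dots,g_k)$ pointwise on the separable span, and pointwise convergence of uniformly bounded functionals gives no control over their values at the moving points $b_m=a_{\Phi^{(m)},\e}$, which need not converge in norm; so no diagonalization closes the gap. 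Put differently: each witness $a$ does handle the entire weak$^*$-open set $V_a=\{(g_1,\dots,g_k): |g_j(a)|<\delta \ \forall j\}$ of tuples, but a weak$^*$-dense set of tuples, each covered by \emph{some} such neighborhood, need not have those neighborhoods covering all of $(E')^k$ --- what is needed is a covering argument, not a density argument. (Also, the initial appeal to Mazur's theorem is not load-bearing: the set $C_0$ plays no role in the rest of the construction.)

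The standard repair, and in essence the argument the paper relies on, replaces density by Lindel\"ofness of the space of tuples. Since $E$ is metrizable, $E'$ with the weak$^*$ topology is $\sigma$-compact (it is the union of the polars $U_n^\circ$ of a countable base of neighborhoods of $0$, each weak$^*$-compact by Alaoglu--Bourbaki), hence every power $(E')^k$ is Lindel\"of. For fixed $k$ and $\delta=1/m$ the sets $V_a$, $a\in A$, form a weak$^*$-open cover of $(E')^k$ because $0\in\overline{A}^w$; a countable subcover gives a countable $A_{k,m}\subset A$, and $B=\bigcup_{k,m}A_{k,m}$ satisfies $0\in\overline{B}^w$. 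This is exactly the Arhangel'skii--Pytkeev bound $t(C_p(X))\le\sup_k l(X^k)$ applied to $X=(E',\sigma(E',E))$, which is how the paper obtains the stronger Theorem \ref{t:Weak-Fan-tight}: $E_w$ embeds into $C_p(X)$ with $X$ $\sigma$-compact, and such $C_p(X)$ even has countable fan tightness.
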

The Kaplansky theorem can be strengthened by using other sequential concepts which are of great importance for the study of function spaces (see \cite{Arhangel,KocScheep}).

Following Arhangel'skii \cite[II.2]{Arhangel}, we say  that a topological space $X$ has {\em countable fan tightness at a point } $x\in X$ if for each sets $A_n\subset X$, $n\in\NN$, with $x\in \bigcap_{n\in\NN} \overline{A_n}$ there are finite sets $F_n\subset A_n$, $n\in\NN$, such that $x\in \overline{\cup_{n\in\NN} F_n}$; $X$ has {\em countable fan tightness} if $X$ has countable fan tightness at each point $x\in X$. Clearly, if $X$ has countable fan tightness, then $X$ also has countable tightness.

Pytkeev \cite{Pyt} proved that every sequential space satisfies the following property, now known as the  Pytkeev property, which is stronger than
having countable tightness: A topological space $X$ has the {\em Pytkeev property} if for any sets $A\subset X$ and each $x\in \overline{A}\setminus A$, there are infinite subsets $A_1, A_2, \dots $ of $A$ such that each neighborhood of $x$ contains some $A_n$.

A topological space $X$ has the {\em Reznichenko property} (or is a {\em weakly Fr\'{e}chet--Urysohn} space) if $x\in \overline{A}\setminus A$ and $A\subset X$ imply the existence of a countable infinite disjoint family $\Nn$ of finite subsets of $A$ such that for every neighborhood  $U$ of $x$ the family $\{ N\in\Nn : N\cap U=\emptyset\}$ is finite. It is known that
\[
\xymatrix{
\mbox{sequential} \ar@{=>}[r] & \mbox{Pytkeev} \ar@{=>}[r] &  \mbox{Reznichenko}  \ar@{=>}[r] & {\mbox{countable}\atop\mbox{ tight}} },
\]
and none of these implications is reversible (see \cite{malyhin,Pyt}).

For a Tychonoff topological space $X$ we denote by $C_c(X)$ and $C_p(X)$ the space of all continuous real-valued functions on $X$ endowed with the compact-open topology and the topology of pointwise convergence, respectively.  If $X$ is a $\sigma$-compact space, then $C_{p}(X)$ has countable fan tightness by  \cite[II.2.2]{Arhangel} and has the Reznichenko property by \cite[Theorem 19]{KocScheep}. If $E$ is a metrizable lcs, then $X:=(E',\sigma(E',E))$ is $\sigma$-compact by the Alaoglu--Bourbaki theorem. Since $E_w$ embeds into $C_p(X)$, we notice  the following generalization of the Kaplansky theorem.
\begin{theorem}\label{t:Weak-Fan-tight}
Let $E$ be a metrizable lcs (in particular, a Banach space). Then $E_w$ has countable fan tightness and the Reznichenko property.
\end{theorem}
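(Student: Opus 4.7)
The plan is to obtain both properties by transferring them from $C_p(X)$, where $X := (E', \sigma(E', E))$, via the canonical embedding, along the lines sketched in the paragraph immediately preceding the theorem. The two main ingredients are the $\sigma$-compactness of $X$ and the fact that $E_w$ sits inside $C_p(X)$ as a topological subspace.

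First I would verify $\sigma$-compactness of $X$. Since $E$ is metrizable, it admits a countable base $\{U_n : n\in \NN\}$ of balanced neighborhoods of $0$, and by the Alaoglu--Bourbaki theorem each polar $U_n^\circ \subset E'$ is $\sigma(E',E)$-compact. Every $x' \in E'$ is continuous, hence bounded on some $U_n$, so $x'$ lies in $m \cdot U_n^\circ$ for a suitable $m \in \NN$. Thus $E' = \bigcup_{m,n \in \NN} m \cdot U_n^\circ$ is a countable union of compact sets.

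Next I would observe that the canonical evaluation $i : E_w \to C_p(X)$ given by $i(x)(x') := x'(x)$ is a topological embedding: Hahn--Banach gives injectivity, and by the very definition of $\sigma(E, E')$ the topology that $E$ inherits from $C_p(X)$ through $i$ coincides with its weak topology. Hence $E_w$ can be regarded as a topological subspace of $C_p(X)$.

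Now the two cited theorems apply: $C_p(X)$ has countable fan tightness by \cite[II.2.2]{Arhangel} and the Reznichenko property by \cite[Theorem 19]{KocScheep} (both for $\sigma$-compact $X$). The final step is to check that both properties are hereditary to arbitrary subspaces, which is a routine unpacking of the definitions: closures in a subspace are intersections of ambient closures with the subspace, and neighborhoods restrict analogously, so the finite sets $F_n$ (resp.\ the disjoint family $\mathcal{N}$) produced inside $C_p(X)$ can be taken to lie in $E_w$ and continue to witness the defining properties there. This hereditariness is the only place where some care is required; no genuinely new idea is needed beyond invoking the two cited facts and the classical $\sigma$-compactness of the weak-$\ast$ dual of a metrizable lcs.
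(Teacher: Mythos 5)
Your proposal is correct and follows exactly the argument the paper itself gives (in the paragraph preceding the theorem statement): $\sigma$-compactness of $(E',\sigma(E',E))$ via the Alaoglu--Bourbaki theorem, the embedding $E_w\hookrightarrow C_p(X)$, the two cited results on $C_p(X)$, and the (routine) hereditariness of both properties. You have merely supplied the details the authors left implicit, so there is nothing to add.
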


On the other hand,  infinite dimensional  Banach  spaces in the weak topology are never  $k$-spaces.
\begin{theorem}[\cite{S-W}] \label{t:Weak-k-space}
If $E$ is a Banach space, then $E_w$ is a $k$-space (in particular, sequential) if and only if $E$ is finite dimensional.
\end{theorem}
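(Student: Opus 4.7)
The statement is an equivalence, and the easy direction is immediate: if $\dim E<\infty$, then $E_w$ coincides with the norm topology, which is metrizable and hence a $k$-space. Everything is in the converse. I would assume $E$ is infinite-dimensional and aim to exhibit a set $A\subset E$ that is not weakly closed yet satisfies $A\cap K$ weakly closed for every weakly compact $K\subset E$; this directly witnesses that $E_w$ is not a $k$-space.

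\textbf{Reduction to weak sequential closedness.} By the Eberlein--\v{S}mulian theorem, every weakly compact subset $K$ of a Banach space is angelic, i.e.\ its weak closure coincides with its weak sequential closure. Hence if $A$ is weakly sequentially closed in $E$, then for any weakly compact $K$ every point of $\overline{A\cap K}^{w}$ is the weak limit of a sequence from $A\cap K\subset A$ and therefore lies in $A\cap K$. Consequently, it suffices to produce a set $A\subset E$ that is weakly sequentially closed but not weakly closed.

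\textbf{Supply of functionals and vectors.} Apply the Josefson--Nissenzweig theorem to get a sequence $(f_n)\subset E^*$ with $\|f_n\|=1$ and $f_n\to 0$ in the weak-$*$ topology of $E^*$. By Hahn--Banach pick $x_n\in E$ with $\|x_n\|\le 1$ and $f_n(x_n)>1/2$. By passing to a subsequence and perturbing slightly (using that for any $n$ the subspace $\bigcap_{i<n}\ker f_i$ has finite codimension and is thus infinite-dimensional, so one can slightly move $x_n$ inside it without spoiling $f_n(x_n)>1/2$), one may further assume the $x_n$ satisfy $f_i(x_n)=0$ for $i<n$. This biorthogonalization is the technical fiddling that makes the next step work.

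\textbf{Construction of $A$.} Using these data, build a suitable doubly-indexed set, for instance of the form $A=\{n(x_m-x_n):n\in\NN,\ m>n\}$, or a diagonal variant $A=\{n x_{m}:n\in\NN,\ m\ge n,\ \max_{i\le n}|f_i(x_m)|<1/n^2\}$ (with the indices $m$ chosen for each $n$ using that $f_i(x_m)=0$ for $m>i$). The key properties to verify are: (a) any weakly convergent sequence in $A$ has its limit in $A$ -- because the ``outer'' index controls the norm (elements corresponding to different $n$ have norms growing with $n$, blocking boundedness of a weakly convergent sequence), so such a sequence must have bounded outer index and, after extraction, a fixed outer index, at which point the set of possible values is weakly discrete with limit inside $A$; (b) $0\in\overline{A}^{w}$ -- for a basic weak neighborhood $U=\{y:|g_i(y)|<\e,\ i\le p\}$ of $0$, fix $n$ large enough and use the biorthogonality together with infinite-dimensionality of $\bigcap_{i\le p}\ker g_i$ (or a density/approximation argument applied inside $\bigcap_{i<n}\ker f_i$) to locate an element of $A$ inside $U$.

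\textbf{Main obstacle.} The delicate step is Step 3: the choice of $A$ must simultaneously be rich enough that $0\in\overline{A}^w$ and rigid enough that no weakly convergent sequence from $A$ escapes it. These pull in opposite directions, so the construction lives on the boundary between the two. The correct balance is obtained through the double indexing: one coordinate ensures the norms of the elements of $A$ grow without bound (killing weakly convergent subsequences that try to move between ``levels''), while the other coordinate, with the help of the Josefson--Nissenzweig sequence and the biorthogonalization, produces weak approximations to $0$ at each level. Checking (a) and (b) rigorously -- especially verifying (a) carefully, because one has to rule out weak convergence to a point outside $A$ coming from any subsequence, not just a convenient one -- is the technical heart of the proof.
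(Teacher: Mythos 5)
First, note that the paper does not prove this statement at all: it is quoted from Schl\"uchtermann--Wheeler \cite{S-W}, so there is no internal proof to compare against. Judged on its own terms, your reduction is sound: by Eberlein--\v{S}mulian weakly compact sets are angelic, so a weakly sequentially closed set meets every weakly compact set in a weakly closed set, and hence a weakly sequentially closed but not weakly closed set does witness that $E_w$ is not a $k$-space. The genuine gap is in the construction: neither of your candidate sets has property (a). For the first candidate take $E=\ell_2$, $f_n=e_n^*$, $x_n=e_n$; then for the outer index $n=1$ the points $e_m-e_1$ converge weakly to $-e_1$, which is not in $A=\{n(x_m-x_n):m>n\}$ since every element of $A$ has norm at least $\sqrt{2}$. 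Thus $K=\{e_m-e_1:m\ge 2\}\cup\{-e_1\}$ is weakly compact and $A\cap K$ is not weakly closed. Your justification that for a fixed outer index ``the set of possible values is weakly discrete with limit inside $A$'' is exactly what fails: the inner-index family is bounded, so in any reflexive space it has weak cluster points, and these typically lie outside $A$. The second candidate is worse: with the same data $n e_m\to 0$ weakly as $m\to\infty$, so $0$ itself is a weak sequential limit of $A$ and cannot serve as the witness point (and in $\ell_1$, with $g=(1,1,\dots)$, one even has $0\notin\overline{A}^{w}$). Repairing this by passing to the weak sequential closure forces you to control an iterated (possibly transfinite) closure, which your sketch does not address. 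The biorthogonalization step also has an unaddressed issue: moving $x_n$ into $\bigcap_{i<n}\ker f_i$ while keeping $f_n(x_n)>1/2$ requires $\operatorname{dist}(f_n,\operatorname{span}(f_1,\dots,f_{n-1}))>1/2$, which holds only after a further subsequence extraction that needs an argument.

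There is a much shorter route using tools the paper itself develops. For infinite-dimensional $E$ every basic weak neighborhood of $0$ contains a finite-codimensional subspace, so $0\in\overline{\{x:\|x\|\ge n\}}^{w}$ for every $n$. By Theorem \ref{t:Weak-Fan-tight}, $E_w$ has countable fan tightness at $0$, so there are finite sets $F_n\subset\{x:\|x\|\ge n\}$ with $0\in\overline{\bigcup_{n\in\NN}F_n}^{w}$. The set $A=\bigcup_{n\in\NN}F_n$ is not weakly closed ($0\in\overline{A}^{w}\setminus A$), yet every weakly compact set is norm bounded and therefore meets $A$ in a finite, hence weakly closed, set. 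This is essentially the argument inside the Claim in the paper's proof of Theorem \ref{t:Weak-Pytkeev}, and it avoids both the Josefson--Nissenzweig theorem and the sequential-closedness difficulties entirely.
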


We prove another result of this type having in mind that the concepts of being a $k$-space and having the Pytkeev property are independent in general.
\begin{theorem} \label{t:Weak-Pytkeev}
If $E$ is a normed space, then $E_w$ has the Pytkeev property if and only if $E$ is finite dimensional.
\end{theorem}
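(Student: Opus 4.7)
The easy direction is immediate: when $\dim E<\infty$, $E_w$ equals the metrizable norm topology, so it is sequential and hence Pytkeev. For the converse I argue contrapositively. Suppose $E$ is infinite-dimensional and exhibit a failure of the Pytkeev property at $0\in E_w$. Take
\[
A\ :=\ \{x\in E:\|x\|\ge 1\}\ =\ \bigcup_{n\in\NN}nS_E.
\]
Since $E$ is infinite-dimensional, every basic weak neighbourhood $U=\{x:|\phi_i(x)|<\varepsilon,\ i\le K\}$ of $0$ contains the infinite-dimensional subspace $\bigcap_i\ker\phi_i$, which meets every sphere $nS_E$; thus $0\in\overline{A}^w\setminus A$. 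Assume, for contradiction, that $\{A_k\}_{k\in\NN}$ is a family of infinite subsets of $A$ witnessing Pytkeev at $0$, i.e., every weak neighbourhood of $0$ contains some $A_k$.

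Define the seminorms $p_k(\phi):=\sup_{x\in A_k}|\phi(x)|\in[0,\infty]$. Applying the Pytkeev condition to $U_\phi=\{x:|\phi(x)|<1\}$ for each $\phi\in E'$ yields some $k$ with $p_k(\phi)\le 1$; hence $E'=\bigcup_k\{\phi:p_k(\phi)\le 1\}$, a countable union of closed absolutely convex sets in the Banach space $(E',\|\cdot\|)$. Baire's category theorem then provides $k_0$ for which $\{\phi:p_{k_0}(\phi)\le 1\}$ has non-empty interior; being absolutely convex it is a $0$-neighbourhood, so $p_{k_0}$ is a continuous seminorm on $E'$. By Hahn--Banach this forces $A_{k_0}\subseteq C\cdot B_E$ for some $C\ge 1$.

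The remainder constructs finitely many $\phi_1,\dots,\phi_K\in E'$ and $\varepsilon>0$ so that the weak neighbourhood $V=\{x:|\phi_i(x)|<\varepsilon,\ i\le K\}$ fails to contain any $A_k$. The building blocks are: Hahn--Banach functionals $\psi_k\in S_{E'}$ with $\psi_k(x_k)=\|x_k\|\ge 1$ for chosen $x_k\in A_k$, giving natural separators of each $A_k$ from $0$; and a weak-$*$ null sequence $(\psi'_n)\subseteq S_{E'}$ provided by the Josefson--Nissenzweig theorem, available precisely because $E$ is infinite-dimensional. A diagonal argument merges these into a finite collection $\phi_1,\dots,\phi_K$ with the required property, contradicting the Pytkeev hypothesis.

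The main obstacle is this final diagonal construction: countably many sets $A_k$ must be simultaneously excluded by a single weak neighbourhood defined by only finitely many functionals. The tension between the countable horizontal complexity of the Pytkeev family and the finite vertical complexity of weak neighbourhoods is eased by the Baire-given norm-boundedness (which turns $p_{k_0}$ into an honest continuous seminorm and enables quantitative comparisons with unit-ball neighbourhoods) together with the geometric flexibility granted by Josefson--Nissenzweig. Handling the possibly norm-unbounded $A_k$'s that remain outside the Baire class, and ensuring that the diagonal choice of the $\phi_i$'s simultaneously catches each $A_k$, is the technical heart of the argument.
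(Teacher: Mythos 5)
Your outline has two genuine gaps, one of which is fatal to the chosen strategy. First, the witness set $A=\{x\in E:\|x\|\ge 1\}$ does not in general refute the Pytkeev property at $0$. Take $E=\ell_2$: since $E'$ is separable, $B_w$ (and hence $2B_w$, $3B_w$, \dots) is metrizable, so one can fix a countable base $\{U_n\}$ of weak neighborhoods of $0$ relative to $2B$ and put $A_n:=U_n\cap S_E$; each $A_n$ is an infinite subset of your $A$, and every weak neighborhood of $0$ contains some $A_n$. So for this $A$ the Pytkeev condition at $0$ \emph{is} satisfied, even though $\ell_2$ is infinite dimensional. The failure of the Pytkeev property must be witnessed by a set all of whose infinite subsets are unbounded, and producing such a set is not free: the paper gets it from countable fan tightness of $E_w$ (Theorem \ref{t:Weak-Fan-tight}), choosing finite sets $F_n\subset A\setminus nB$ with $0\in\overline{\bigcup_n F_n}$, so that $F=\bigcup_n F_n$ meets each ball $nB$ in a finite set. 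Your Baire category step is correct as far as it goes, but it only shows that \emph{some} $A_{k_0}$ is norm bounded, which, as the $\ell_2$ example shows, is entirely compatible with the Pytkeev condition holding for your $A$; it yields no contradiction.

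Second, the step you defer as ``the technical heart'' --- finding one weak neighborhood excluding all countably many $A_k$ --- is precisely the content of the paper's Lemma \ref{lemma:Funct-Seq}, and it is not a routine diagonalization: one inductively builds $a_n\in A_n$ and functionals $\chi_n\in S_{E'}$ with $|\langle\chi_n,a_i\rangle|>1$ for $i\le n$ (using convex combinations with a functional unbounded on $A_{n+1}$ when $\chi_n$ is bounded there), and then takes a weak$^*$ cluster point $\chi$ of $(\chi_n)$ in $B_{E'}$, so that the single neighborhood $\{x:|\langle\chi,x\rangle|<1/2\}$ misses a point of every $A_n$. This works only because every $A_n$ is unbounded, which returns you to the first gap. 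Note also that the paper does not derive a contradiction directly: it uses the bounded-subset claim together with Theorem 14.3 of \cite{kak} to conclude that $E_w$ would be Fr\'echet--Urysohn, and then invokes Theorem \ref{t:Weak-k-space} to force finite dimensionality. Neither the Josefson--Nissenzweig theorem nor your proposed finite family $\phi_1,\dots,\phi_K$ appears, and as written your proposal does not constitute a proof.
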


These results show that the question when a Banach space endowed with the weak topology is homeomorphic to a certain fixed model space from the infinite-dimensional topology is very restrictive and motivated specialists  to detect the aforementioned properties only for some natural classes of subsets of $E$, e.g., balls or bounded subsets of $E$.
By $B_w$ we denote the closed unit ball $B$ of a Banach space $E$ endowed with the weak topology. It is well known that $B_w$ is (separable) metrizable if and only if the dual space $E'$ is norm separable. Schl\"{u}chtermann and Wheeler obtained in \cite[Theorem 5.1]{S-W} the following characterization.
\begin{theorem}[\cite{S-W}] \label{t:S-W-1}
The following conditions on a Banach space $E$ are equivalent: {\em (a)} $B_w$ is Fr\'{e}chet--Urysohn; {\em (b)} $B_w$ is sequential; {\em (c)} $B_w$ is a $k$-space; {\em (d)} $E$ contains no isomorphic copy of $\ell_1$.
\end{theorem}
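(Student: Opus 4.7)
The plan is to prove the cycle $(a)\Ra(b)\Ra(c)\Ra(d)\Ra(a)$. The arrows $(a)\Ra(b)$ and $(b)\Ra(c)$ are automatic from the standard hierarchy \emph{Fr\'echet--Urysohn $\Ra$ sequential $\Ra$ $k$-space}, so the substantive work lies in $(c)\Ra(d)$ and $(d)\Ra(a)$.

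For $(c)\Ra(d)$, I will argue by contrapositive: if $E$ contains a closed subspace $F$ isomorphic to $\ell_1$, then $B_w$ is not a $k$-space. The subspace $F$ is weakly closed in $E$ (being norm-closed and convex), and by Hahn--Banach its weak topology coincides with the restriction of $\sigma(E,E')$; hence $B_F:=B\cap F$ is a weakly closed subset of $B_w$. Since closed subsets of $k$-spaces are $k$-spaces, it suffices to show that $(B_F,w)$ is not a $k$-space. Here I will invoke the Schur property of $\ell_1$ together with the Eberlein--\v{S}mulian theorem to conclude that weakly compact subsets of $B_F$ are norm compact and that the two topologies coincide on each such set. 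Consequently, if $(B_F,w)$ were a $k$-space then a set would be weakly closed iff its intersection with every norm-compact subset of $B_F$ is norm closed; because $(B_F,\|\cdot\|)$ is metrizable and thus itself a $k$-space, this would force the weak and norm topologies on $B_F$ to agree. But $F'\cong\ell_\infty$ is not norm separable, so $(B_F,w)$ is not metrizable, a contradiction.

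For $(d)\Ra(a)$, assume $E$ contains no copy of $\ell_1$ and fix $A\subseteq B$ with $x\in\overline{A}^{\,w}$. By Theorem~\ref{t:Weak-Fan-tight}, $E_w$ has countable tightness, so I can pick a countable $A_0\subseteq A$ with $x\in\overline{A_0}^{\,w}$ and pass to the separable closed subspace $E_0:=\overline{\mathrm{span}}(A_0\cup\{x\})$, which still contains no copy of $\ell_1$. Then $K:=(B_{E_0'},w^*)$ is compact and metrizable, hence Polish, and the evaluation map $y\mapsto\hat y|_K$ embeds $B\cap E_0$ into $C_p(K)$ with the pointwise topology matching the weak topology. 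Goldstine's theorem identifies the pointwise closure of $B\cap E_0$ in $\mathbb{R}^K$ with the bidual ball $B_{E_0^{**}}$, and the Odell--Rosenthal theorem ensures that every element of $B_{E_0^{**}}$ is a Baire-class-$1$ function on $K$. Thus $A_0\cup\{x\}$ lies in a pointwise-compact family of Baire-$1$ functions on a Polish space, and the Bourgain--Fremlin--Talagrand theorem yields that this family is Fr\'echet--Urysohn in the pointwise topology. Extracting a sequence in $A_0\subseteq A$ converging to $x$ pointwise on $K$ gives weak convergence in $E_0$, and hence in $E$.

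I expect the main obstacle to be the application of Bourgain--Fremlin--Talagrand and Odell--Rosenthal in $(d)\Ra(a)$; these lie substantially deeper than the Schur-based argument for $(c)\Ra(d)$. The reduction to a separable subspace via countable tightness is routine but essential, since both deep theorems are most naturally formulated for Polish spaces.
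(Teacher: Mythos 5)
The paper does not prove Theorem~\ref{t:S-W-1}; it is quoted from \cite{S-W} (Theorem~5.1 there), so there is no internal argument to compare yours against. Your proposal is correct and follows what is essentially the standard route to this result. The implications (a)$\Ra$(b)$\Ra$(c) are formal; your contrapositive for (c)$\Ra$(d) is sound, since a closed subspace $F\cong\ell_1$ is weakly closed by Mazur's theorem and carries the restricted weak topology by Hahn--Banach, closed subspaces of $k$-spaces are $k$-spaces, and the Schur property together with Eberlein--\v{S}mulian identifies the compact subsets of $(B\cap F,w)$ with the norm-compact ones (with coinciding topologies), so the $k$-space property would force the weak and norm topologies to agree on $B\cap F$ and make it weakly metrizable, contradicting the nonseparability of $F'\cong\ell_\infty$. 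In (d)$\Ra$(a), the reduction to a separable subspace via countable tightness (available from Theorem~\ref{t:Weak-Fan-tight}) is legitimate because the weak topology of a closed subspace is the restricted one, and the chain Goldstine $\to$ Odell--Rosenthal $\to$ Bourgain--Fremlin--Talagrand is exactly the classical mechanism behind Schl\"uchtermann and Wheeler's theorem. The only cost is that you import two deep external theorems, but for a statement of this strength that is unavoidable; I see no gap.
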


By Theorem \ref{t:Weak-Fan-tight}, for every Banach space $E$, $B_w$ has the Reznichenko property. We prove the following result which supplements Theorem \ref{t:S-W-1}.
\begin{theorem} \label{t:Ball-Pytkeev}
For a Banach space $E$, $B_w$ has the Pytkeev property if and only if $E$ contains no isomorphic copy of $\ell_1$.
\end{theorem}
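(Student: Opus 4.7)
My plan is as follows. The direction $(\Leftarrow)$ is immediate from Theorem~\ref{t:S-W-1}: if $E$ contains no isomorphic copy of $\ell_1$, then $B_w$ is already Fr\'echet--Urysohn, hence sequential, hence Pytkeev.

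For $(\Rightarrow)$, assume that $E$ contains an isomorphic copy of $\ell_1$ via $T : \ell_1 \to E$, and set $x_n := Te_n/\|T\|$, so that $(x_n) \subset B$ remains equivalent to the unit vector basis of $\ell_1$, say with $c\sum|a_n| \le \|\sum a_n x_n\| \le \sum|a_n|$ for some $c>0$ and every finitely supported $(a_n)$. My candidate witness is
\[
A := \bigl\{\tfrac12(x_n - x_m) : n,m \in \NN,\ n \neq m\bigr\} \subset B,
\]
which visibly does not contain $0$. That $0$ lies in the weak closure of $A$ follows by a standard diagonal extraction: for any weak neighborhood $\{x : |f_i(x)| < \e,\ i \le k\}$ of $0$, each bounded real sequence $(f_i(x_n))_n$ admits a common convergent subsequence, and for any two large enough indices $n<m$ in it one has $\tfrac12(x_n-x_m)$ inside the neighborhood.

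Now I would argue by contradiction: assume infinite sets $A_k \subseteq A$ ($k \in \NN$) satisfy that every weak neighborhood of $0$ in $B_w$ contains some $A_k$. Encode each $A_k$ as an infinite $S_k \subset \NN^2\setminus\Delta$ via $(n,m) \in S_k \Leftrightarrow \tfrac12(x_n-x_m) \in A_k$, and build greedily a forest $F$ on $\NN$ picking at stage $k$ an unordered edge $e_k = \{n_k,m_k\}$ induced by a pair in $S_k$ so that $F_k := F_{k-1} \cup \{e_k\}$ remains a forest: the set of edges whose addition to $F_{k-1}$ would close a cycle is contained in $\bigcup_j \binom{C_j}{2}$, finite because the components $C_j$ of $F_{k-1}$ are finitely many finite trees, while $S_k$ supplies infinitely many candidate unordered edges. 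Being a forest, $F$ is bipartite, so it admits a proper two-coloring $f : \NN \to \{0,1\}$ with $f(n_k) \neq f(m_k)$ for every $k$ (extended arbitrarily outside $V(F)$). The linear functional $\sum a_n x_n \mapsto \sum a_n f(n)$ is well-defined and bounded (with norm at most $1/c$) on the closed linear span of $(x_n)$, hence extends via Hahn--Banach to $g \in E^\ast$ with $g(x_n) = f(n)$. Then for every $k$,
\[
\bigl| g\bigl(\tfrac12(x_{n_k}-x_{m_k})\bigr)\bigr| = \tfrac12|f(n_k)-f(m_k)| = \tfrac12,
\]
so the weak neighborhood $U := \{x \in E : |g(x)| < \tfrac14\}$ of $0$ contains no $A_k$, contradicting the $\pi$-network hypothesis and proving that $B_w$ fails the Pytkeev property.

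The principal difficulty lies in the simultaneity demanded by the last step: one has to produce a \emph{single} continuous linear functional $g \in E^\ast$ witnessing that \emph{every} $A_k$ leaves some fixed weak neighborhood of $0$. The greedy forest construction is the combinatorial device that compresses the countable family $(S_k)$ into a single $\{0,1\}$-valued coloring $f$ which $\tfrac12$-separates one chosen edge of each $S_k$, and the presence of an isomorphic $\ell_1$-copy in $E$ is precisely what lets one realize any bounded real sequence on $(x_n)$ as the restriction of a bounded functional on $E$ via Hahn--Banach.
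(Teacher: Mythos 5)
Your proof is correct and follows essentially the same route as the paper: Proposition~\ref{p:Pytkeev-Ball} uses the same witness set $A=\{\tfrac12(e_m-e_n)\}$ and the same greedy selection of one point from each $A_k$ together with a $\pm1$ sign assignment producing a single functional in $S_{\ell_\infty}$ that separates all chosen points from $0$ --- your forest-plus-bipartite-coloring device is exactly that sign assignment in different clothing. The only (harmless) differences are that you work directly in $E$ via an equivalent $\ell_1$-basis and Hahn--Banach instead of reducing to the unit ball of $\ell_1$ itself, and you verify $0\in\overline{A}$ by a diagonal subsequence extraction rather than the paper's explicit pigeonhole argument.
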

So, if $E$ is the James Tree space, then $B_w$ is Fr\'{e}chet-Urysohn but is not metrizable. On the other hand,  $B_w$ has countable (fan) tightness for every Banach space $E$. This fact and Theorems \ref{t:S-W-1} and \ref{t:Ball-Pytkeev} motivate us to consider another natural generalizations of metrizability.

One of the most immediate extensions of the class of separable metrizable spaces is the class of $\aleph_{0}$-spaces introduced by Michael in \cite{Mich}. Following \cite{Mich}, a topological space $X$ is an {\em $\aleph_0$-space} if $X$ possesses a countable $k$-network. A family $\mathcal{N}$ of subsets of $X$ is a {\em $k$-network}  if for any open subset $U\subset X$ and compact subset $K\subset U$ there exists a finite subfamily $\F\subset\mathcal{N}$ such that $K\subset\bigcup\FF\subset U$. Schl\"{u}chtermann and Wheeler obtained   the following  theorem for $B_w$.
\begin{theorem}[\cite{S-W}] \label{t:S-W-2}
The following conditions on a Banach space $E$ are equivalent: {\em (a)} $B_w$ is (separable) metrizable; {\em (b)} $B_w$ is an $\aleph_0$-space and a $k$-space.
\end{theorem}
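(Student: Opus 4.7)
The direction $(a)\Rightarrow(b)$ is routine: a separable metrizable space admits a countable base, which is in particular a countable $k$-network, and every metrizable space is a $k$-space.

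For $(b)\Rightarrow(a)$, fix a countable $k$-network $\mathcal{N}=\{N_n:n\in\w\}$ on $B_w$. My plan is first to extract easy structural consequences and then to reduce the problem to establishing first-countability of $B_w$. Since $B_w$ is a $k$-space, Theorem~\ref{t:S-W-1} yields that $E$ contains no isomorphic copy of $\ell_1$ and that $B_w$ is Fr\'echet--Urysohn. Being an $\aleph_0$-space, $B_w$ is cosmic, hence hereditarily separable; a Hahn--Banach argument applied to the norm closure of the linear span of a countable weakly dense subset of $B$ then forces $E$ itself to be norm-separable.

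Next, I would show that at every $p\in B_w$ the family $\mathcal{N}_p=\{N\in\mathcal{N}:p\in N\}$ serves as a countable $cs^*$-network at $p$: for any weak neighborhood $U\ni p$ and any $(x_k)\subset B$ weakly convergent to $p$, the Fr\'echet--Urysohn property forces $x_k\in U$ eventually; the weakly compact set $K=\{p\}\cup\{x_k:x_k\in U\}$ is covered by finitely many members of $\mathcal{N}$ inside $U$; and pigeonhole locates some $N\in\mathcal{N}_p$ with $N\subset U$ meeting $(x_k)$ infinitely often. The final step — upgrading this sequential information into a genuine countable local base at $p$ using the $k$-space structure — will, I expect, follow by a contradiction argument: failure of first-countability at some $p$ would allow one to construct a sequence $x_k\to p$ which, together with a suitable weak neighborhood $U$, violates the $cs^*$-property just established. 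Once first-countability is in hand, a Hahn--Banach argument identifies a countable norm-dense subset of $E'$, which yields the desired weak metrizability of $B$.

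The main obstacle is precisely this last passage from countable $cs^*$-character plus the $k$-space property to first-countability. This is exactly the main technical novelty of the present paper extending the result of Schl\"uchtermann--Wheeler: one may weaken the $\aleph_0$-hypothesis to countable $cs^*$-character, at which point the $k$-space hypothesis does the crucial work in producing a countable local base.
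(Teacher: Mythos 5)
Your reduction is the right one, and it essentially mirrors how this paper handles the stronger statement (Theorem \ref{t:Ball-Metriz}, (v)$\Rightarrow$(i)): get the Fr\'echet--Urysohn property from Theorem \ref{t:S-W-1}, observe that a countable $k$-network yields countable $cs^*$-character, prove that $B_w$ is first countable, and then upgrade to metrizability. (For the last step you propose a Hahn--Banach/Goldstine argument producing a countable norm-dense subset of $E'$; the paper instead exhibits a countable base of the group uniformity on $B$ in Proposition \ref{p:Ball-FirstCountable}. Both routes work.) One small slip along the way: the member of the finite subfamily covering $K=\{p\}\cup\{x_k : x_k\in U\}$ that picks up infinitely many $x_k$ need not be one containing $p$, so you should use all of $\mathcal N$, not $\mathcal N_p$, as the $cs^*$-network at $p$; this is harmless for the definition used here.

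The genuine gap is exactly the step you defer to ``a contradiction argument.'' The negation of first countability at $p$ does not hand you a single convergent sequence violating the $cs^*$-property. What it gives you, after invoking the Fr\'echet--Urysohn property, is for each $m$ a sequence $(x_{m,n})_n\subset B\setminus\bigcup_{i\leq m}N_i$ converging to $p$; to contradict the $cs^*$-property you must \emph{diagonalize}, i.e.\ extract a single sequence $(x_{m_k,n_k})_k\to p$ with the $m_k$ distinct, so that no $N_i$ can meet it in an infinite set. This diagonalization is precisely the property $(\alpha_4)$, and Fr\'echet--Urysohn spaces need not have it in general, so an appeal to Fr\'echet--Urysohn alone cannot close the argument. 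The paper supplies the missing ingredient in Proposition \ref{important}: every convex Fr\'echet--Urysohn subset of a tvs has $(\alpha_4)$, proved by a convexity trick (considering the midpoints $\tfrac12(x_{m,n}+x_{1,m})$, following Nyikos's argument for topological groups); Proposition \ref{p:ball-Metriz} then carries out exactly the contradiction you anticipate. Without $(\alpha_4)$ or some substitute exploiting the convexity (or homogeneity) of $B$, your plan does not go through.
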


Having in mind the Nagata-Smirnov metrization theorem, O'Meara \cite{OMe2} introduced the class of $\aleph$-spaces: A topological space $X$ is called an {\em $\aleph$-space} if $X$ is regular and has a $\sigma$-locally finite $k$-network. Any metrizable space $X$ is an \mbox{$\aleph$-space}. A topological space $X$ is an $\aleph_0$-space if and only if $X$ is a Lindel\"{o}f $\aleph$-space (\cite{GK-GMS1}). In \cite{GK-GMS1,GKKM} it is shown that  each $\aleph$-space $X$ has countable $cs^\ast$-character. Recall  from \cite{BZ} that a topological space $X$ has   {\em countable $cs^\ast$-character}  if for each $x\in X$, there exists a countable family $\DD$ of subsets of $X$, such that for each sequence in $X$ converging to $x$ and each neighborhood $U$ of $x$, there is $D\in\DD$ such that $D\subset U$ and $D$ contains infinitely many elements of that sequence. The importance of this concept for the theory of Topological Vector Spaces may be explained by the following result obtained in \cite{GaK}: If $E$ is a Baire topological vector space (tvs) or a $b$-Baire-like lcs, then $E$ is metrizable if and only if $E$ has countable $cs^\ast$-character.

Tsaban and Zdomskyy \cite{boaz} strengthened the Pytkeev property as follows. A  topological space $X$ has  the {\em strong Pytkeev property} if for each $x\in X$, there exists a countable family $\DD$ of subsets of $X$,   such that for each neighborhood $U$ of $x$ and  each \mbox{$A\subset X$} with $x\in \overline{A}\setminus A$, there is $D\in\DD$ such that $D\subset U$ and $D\cap A$ is infinite. Clearly, the strong   Pytkeev property $\Rightarrow $ the Pytkeev property, however in general, the Fr\'{e}chet--Urysohn property $\not\Rightarrow $ the strong   Pytkeev property $\not\Rightarrow $   $k$-space (see \cite{GKL2}). For any Polish  space $X$, the function space $C_c(X)$ has the strong Pytkeev property by \cite{boaz}. This result has been extended to  \v{C}ech-complete spaces $X$, see \cite{GKL2}. Moreover, if $E$ is a strict \mbox{$(LM)$-space}, a sequential dual metric space, or a $(DF)$-space of countable tightness, then $E$ has the strong Pytkeev property \cite{GKL2}. In particular, the space $D'(\Omega)$ of distributions over an open subset $\Omega\subset\mathbb{R}^{n}$ has the strong Pytkeev property while being  not a \mbox{$k$-space}, see \cite{GKL2}.

Being motivated by above facts, we generalize Theorem \ref{t:S-W-2} as follows (the equivalence (i)$\Leftrightarrow$(ii) is probably well known, but hard to locate and we propose  an elementary proof of it below).
\begin{theorem} \label{t:Ball-Metriz}
The following conditions on a Banach space $E$ are equivalent:
\begin{enumerate}
\item[{\rm (i)}] $B_w$ is (separable) metrizable;
\item[{\rm (ii)}]  $B_w$ is first countable;
\item[{\rm (iii)}]  $B_w$ has the strong Pytkeev property;
\item[{\rm (iv)}]  $B_w$ is an $\aleph$-space and a $k$-space;
\item[{\rm (v)}]  $B_w$ has countable $cs^\ast$-character and is a $k$-space;
\item[{\rm (vi)}]  $E_w$  is an $\aleph$-space and $B_w$ is  a $k$-space;
\item[{\rm (vii)}]  $E_w$ has countable $cs^\ast$-character and $B_w$ is  a $k$-space;
\item[{\rm (viii)}]  $E_w$ has countable $cs^\ast$-character and contains no  isomorphic copy of $\ell_1$;
\item[{\rm (ix)}]  $E'$ is separable.
\end{enumerate}
\end{theorem}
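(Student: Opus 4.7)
The plan is to establish the nine equivalences as a cycle of implications organised in three blocks: the classical equivalence (i)$\Leftrightarrow$(ii)$\Leftrightarrow$(ix), the forward implications (i)$\Rightarrow$(iii)--(viii), and a return funnel in which each of (iii), (iv), (vi), (vii), (viii) reduces to (v), leaving (v)$\Rightarrow$(i) as the decisive step.

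Block~1 is standard. (i)$\Rightarrow$(ii) is trivial; for (ii)$\Rightarrow$(ix), a countable weak base $\{V_n\}$ at $0$ in $B_w$ is refined so that each $V_n$ is cut out by a finite set $F_n\subset E'$, and the norm-closed linear span of $\bigcup_n F_n$ must exhaust $E'$ (otherwise Hahn--Banach provides $f\in E'$ at positive distance $\delta$ from this span, and the weak slab $\{x\in B:|f(x)|<\delta/2\}$ contains no $V_n$); and (ix)$\Rightarrow$(i) embeds $B_w$ into the metrizable space $\mathbb{R}^\omega$ by evaluation at a norm-dense countable subset of $E'$. Block~2 is also routine: if $B_w$ is separable metrizable, it is first countable, strongly Pytkeev and an $\aleph_0$-space, giving (iii)--(v); decomposing $E_w=\bigcup_n nB_w$ and invoking Banach--Steinhaus, the countable $k$-networks of the separable metric subspaces $nB_w$ paste into a countable $k$-network on $E_w$, so $E_w$ is itself an $\aleph_0$-space, yielding (vi), (vii); and (viii) follows because an $\ell_1$-subspace of $E$ would force $E'$ to map onto the non-separable $\ell_\infty$.

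For Block~3 I use four basic facts: every $\aleph$-space has countable $cs^\ast$-character (\cite{GK-GMS1,GKKM}); the strong Pytkeev property trivially implies countable $cs^\ast$-character; countable $cs^\ast$-character is inherited by subspaces; and countable $cs^\ast$-character of $B_w$ at $0$ transfers to $E_w$ at $0$ by scaling (every weakly null sequence being norm-bounded by Banach--Steinhaus). Combining these with Theorem~\ref{t:Ball-Pytkeev} (strong Pytkeev $\Rightarrow$ no $\ell_1$) and Theorem~\ref{t:S-W-1} (no $\ell_1$ $\Leftrightarrow$ $B_w$ is a $k$-space) funnels each of (iii), (iv), (vi), (vii), (viii) into (v).

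The crux is (v)$\Rightarrow$(i). Theorem~\ref{t:S-W-1} upgrades the $k$-space hypothesis to $B_w$ being Fr\'echet--Urysohn. A preliminary observation is that a countable $cs^\ast$-network $\{D_n\}$ at $0$ in $B_w$ already forces $E$ to be norm-separable: the norm-closed linear span $M$ of $\bigcup_n D_n$ is separable, and if $M\ne E$, any $y\in E\setminus M$ with $\|y\|\le 1$ gives a weakly null sequence $(y/k)_{k\ge 1}\subset B$ that no $D_n\subseteq M$ can capture, contradicting the $cs^\ast$-network condition. The deeper task is to upgrade norm-separability of $E$ to norm-separability of $E'$---this is precisely where the James Tree space is ruled out and is the main obstacle. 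The intended Banach-space metrization principle is that, under hypothesis (v), $B_w$ is in fact first countable at $0$; the linear structure of $E$ is essential here, because the analogous purely topological statement fails (the sequential fan $S_\omega$ is Fr\'echet--Urysohn with countable $cs^\ast$-character yet not first countable). Once first countability at $0$ is established, the Hahn--Banach argument of Block~1 delivers (ix), and hence (i).
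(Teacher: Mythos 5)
Your overall architecture (the cycle through (v), the Hahn--Banach route for (ii)$\Rightarrow$(ix), and the funnelling of (iii), (iv), (vi)--(viii) into (v) via Theorems \ref{t:S-W-1} and \ref{t:Ball-Pytkeev} together with hereditariness and pasting of countable $cs^\ast$-character) matches the paper's and is sound. But at the decisive implication (v)$\Rightarrow$(i) you stop exactly where the real work begins: you announce that ``the intended Banach-space metrization principle is that, under hypothesis (v), $B_w$ is in fact first countable at $0$'' without proving it, and although you correctly observe that the purely topological version is false (the sequential fan) so some use of the linear structure is unavoidable, you never say which. The missing idea is Proposition \ref{important}: every convex Fr\'echet--Urysohn subset of a tvs has property $(\alpha_4)$, proved by applying the Fr\'echet--Urysohn property to the set of midpoints $\tfrac12(x_{m,n}+x_{1,m})$. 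With $(\alpha_4)$ in hand, the paper's Proposition \ref{p:ball-Metriz} runs a diagonalization: if no finite union $\bigcup_{i\le m}N_i$ of the $cs^\ast$-network elements contained in $U$ were a neighborhood of $x$, one would get sequences $x_{m,n}\to x$ avoiding $\bigcup_{i\le m}N_i$; then $(\alpha_4)$ produces a single convergent sequence $(x_{m_k,n_k})_k$ with the $m_k$ distinct, and no $N_i$ can meet it in an infinite set --- contradicting the $cs^\ast$-network property. That yields first countability, after which your Hahn--Banach argument (or the paper's uniformity argument, Proposition \ref{p:Ball-FirstCountable}) finishes. Without this $(\alpha_4)$/diagonalization step your treatment of (v)$\Rightarrow$(i) is a restatement of the goal, not a proof.

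A secondary flaw: your ``preliminary observation'' that the norm-closed linear span of $\bigcup_n D_n$ is separable is unjustified --- the members of a countable $cs^\ast$-network are merely countably many \emph{sets}, each of which may be nonseparable, and they cannot in general be shrunk to countable sets while remaining a $cs^\ast$-network. The conclusion (norm-separability of $E$) happens to be true under (v), but it plays no role in closing the gap above since, as you yourself note, the genuine obstacle is passing from separability of $E$ to separability of $E'$.
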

Note that the James Tree space and the sequence space $\ell_{1}(\mathbb{R})$ (which is  an \mbox{$\aleph$-space} in the weak topology, see \cite{GKKM}) show that Theorem \ref{t:Ball-Metriz} fails if one of the assumptions on $E$ is dropped.

It is natural to ask about  good conditions forcing metrizable $B_w$ to be completely metrizable.
Edgar and Wheeler proved the following characterization:
\begin{theorem}[\cite{edgar}] \label{t:E-W}
Let $E$ be a separable Banach space. Then the following conditions are equivalent: {\em (1)} $B_w$ is completely metrizable; {\em (2)} $B_w$ is a Polish space; {\em (3)} $E$ has property $(PC)$ and is an Asplund space; {\em (4)} $B_w$ is metrizable, and every closed subset of it is a Baire space.
\end{theorem}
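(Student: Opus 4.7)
The plan is to split the argument into two cycles: the elementary cycle $(1)\Leftrightarrow(2)\Leftrightarrow(4)$, and the core equivalence $(2)\Leftrightarrow(3)$. For the elementary cycle, separability of $E$ makes $B_w$ automatically separable---any countable norm-dense subset of $B$ is weakly dense---so ``completely metrizable'' and ``Polish'' agree, giving $(1)\Leftrightarrow(2)$. The implication $(1)\Rightarrow(4)$ is immediate, since closed subspaces of completely metrizable spaces are completely metrizable and hence Baire. For $(4)\Rightarrow(1)$ I would invoke the classical theorem (attributed to Hurewicz and Alexandrov) that a separable metrizable space is Polish iff it is hereditarily Baire, equivalently iff it contains no closed copy of $\mathbb{Q}$.

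For $(2)\Rightarrow(3)$, Polishness of $B_w$ gives metrizability, which by the classical duality is equivalent to norm-separability of $E'$; combined with $E$ separable this is Asplundness. For property $(PC)$, given a bounded weakly closed $F\subseteq E$ and a non-empty relatively weakly closed $G\subseteq F$, rescaling makes $G_w$ a closed subspace of $B_w$, hence Polish. Since $E$ is separable the weak and norm Borel structures on $G$ coincide, and a Baire-category argument on the Polish $G_w$---exploiting the weak lower semicontinuity of the norm to control the oscillation of the identity $i\colon G_w\to G_{\|\cdot\|}$---yields a comeagre (in particular non-empty) set of weak-to-norm continuity points of $i$.

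For $(3)\Rightarrow(2)$, Asplundness plus separability of $E$ already give $B_w$ second countable and metrizable; the content is completeness. I would reduce to the elementary cycle by showing that $(PC)$ forces $B_w$ to be hereditarily Baire, from which $(4)\Rightarrow(1)$ concludes. Given a weakly closed $G\subseteq B_w$, a countable family $\{U_n\}$ of dense weakly open subsets of $G_w$, and a non-empty weakly open $W\subseteq G$, I would inductively construct weakly open $V_n\subseteq U_n\cap V_{n-1}\cap W$ with $\mathrm{diam}_{\|\cdot\|}(V_n\cap G)<2^{-n}$ and $\overline{V_n}^{\,w}\subseteq V_{n-1}$, applying $(PC)$ at each step to the weakly closed set $\overline{W\cap U_n\cap V_{n-1}}^{\,w}$ to locate a weak-to-norm continuity point around which $V_n$ is selected. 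Any choice $x_n\in V_n\cap G$ then produces a norm-Cauchy sequence whose norm-limit $x^*$ lies in the norm-complete $G_{\|\cdot\|}$ and belongs to $\bigcap_n\overline{V_n}^{\,w}\cap W\subseteq\bigcap_nU_n\cap W$. The principal obstacle is the density step in this iteration: $(PC)$ supplies only a continuity point in a weakly closed set rather than one inside a prescribed weakly open subset, so extracting a suitable point requires applying $(PC)$ to carefully chosen auxiliary weakly closed sets and shrinking the neighbourhoods accordingly.
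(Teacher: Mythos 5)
First, a point of reference: the paper does not prove Theorem \ref{t:E-W} at all --- it is imported verbatim from Edgar and Wheeler \cite{edgar} and used as a black box --- so your proposal can only be judged against that source and against correctness. The elementary cycle is fine: separability of $E$ makes $B_w$ separable, so (1) and (2) agree, and (1)$\Rightarrow$(4) is immediate. Your (2)$\Rightarrow$(3) is also essentially right: metrizability of $B_w$ forces $E'$ to be norm-separable, hence $E$ is Asplund, and the oscillation argument for $(PC)$ works because in a separable Banach space every norm-open set is a countable union of norm-closed balls, hence weakly $F_\sigma$; so the identity on a bounded weakly closed set is of the first Borel class from a Polish (hence Baire) space into a separable metric space and has a comeagre set of weak-to-norm continuity points. (One should still reconcile this with the exact class of bounded sets over which \cite{edgar} quantifies in the definition of $(PC)$.)

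The genuine gap is in (4)$\Rightarrow$(1), and it propagates into your (3)$\Rightarrow$(2). There is no classical theorem asserting that a separable metrizable space is Polish iff it is hereditarily Baire: a Bernstein set in $\mathbb{R}$ contains no closed copy of $\mathbb{Q}$ (the closure in $\mathbb{R}$ of any countable dense-in-itself subset is a perfect set that must meet the complement in a further perfect set), hence is hereditarily Baire by the standard characterization for metrizable spaces, yet it is not Polish. Hurewicz's dichotomy gives the equivalence only for coanalytic subsets of Polish spaces, so to invoke it you must first show that $B$ is coanalytic (say, Borel) in the compact metrizable space $(B_{E''},w^{*})$ --- a nontrivial point that is in fact part of what Edgar and Wheeler establish --- or else exploit the convexity of $B$ or the norm-completeness of $E$ directly. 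Your sketch of (3)$\Rightarrow$(2) contains the right germ of such a direct argument (nested weakly open sets of norm-diameter at most $2^{-n}$ with nested weak closures, producing a norm-Cauchy sequence), and the ``density obstacle'' you flag is surmountable via the standard reformulation of $(PC)$ as weak-to-norm fragmentability: every nonempty bounded weakly closed set has nonempty relatively weakly open subsets of arbitrarily small norm-diameter. But as written that construction only proves that $B_w$ is hereditarily Baire, i.e., it re-derives (4); to reach complete metrizability you must upgrade it to a complete sieve or strong Choquet argument, or show directly that $B$ is $G_\delta$ in $(B_{E''},w^{*})$, and that upgrade is precisely the content that is missing.
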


Since a metrizable space $X$ admits a complete metric generating its topology
 if and only if it is \v{C}ech-complete, and any \v{C}ech-complete space is a $k$-space, Theorem \ref{t:Ball-Metriz} immediately implies the following corollary  which supplements the Edgar-Wheeler Theorem \ref{t:E-W}.
\begin{corollary} \label{c:B-Polosh}
The following conditions on a Banach space $E$ are equivalent:
\begin{itemize}
\item[{\rm (i)}] $B_w$ is Polish;
\item[{\rm (ii)}] $B_w$ is a \v{C}ech-complete  $\aleph$-space;
\item[{\rm (iii)}]  $B_w$ has countable $cs^\ast$-character and is  \v{C}ech-complete;
\item[{\rm (iv)}]  $E_w$ has countable $cs^\ast$-character and $B_w$ is  a \v{C}ech-complete space.
\end{itemize}
\end{corollary}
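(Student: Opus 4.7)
The plan is to obtain Corollary~\ref{c:B-Polosh} as an essentially immediate consequence of Theorem~\ref{t:Ball-Metriz}, combined with two classical facts recorded just before the statement: every \v{C}ech-complete space is a $k$-space, and a metrizable space is Polish if and only if it is separable and \v{C}ech-complete (equivalently, completely metrizable and separable). Once these are in place, every implication amounts to peeling off a \v{C}ech-completeness hypothesis to expose one of the $k$-space clauses of Theorem~\ref{t:Ball-Metriz}.

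First I would handle the easy direction (i)$\Rightarrow$(ii)$\Rightarrow$(iii). If $B_w$ is Polish, then it is completely metrizable, hence \v{C}ech-complete, and every metrizable space is an $\aleph$-space, giving (ii). For (ii)$\Rightarrow$(iii) I would invoke the fact, recalled in the introduction, that every $\aleph$-space has countable $cs^\ast$-character.

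Next I would close the loop via (iii)$\Rightarrow$(iv)$\Rightarrow$(i). For (iv)$\Rightarrow$(i): since $B_w$ is \v{C}ech-complete it is a $k$-space, so condition (vii) of Theorem~\ref{t:Ball-Metriz} applies and $B_w$ is (separable) metrizable; a separable metrizable \v{C}ech-complete space is Polish. For (iii)$\Rightarrow$(iv) the analogous argument, using clause (v) of Theorem~\ref{t:Ball-Metriz} in place of (vii), shows $B_w$ is already separable metrizable, and then the full chain of equivalences in Theorem~\ref{t:Ball-Metriz} delivers in particular (vi), namely that $E_w$ is an $\aleph$-space, which again forces $E_w$ to have countable $cs^\ast$-character.

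There is no serious obstacle: the entire argument is bookkeeping around the equivalences already established in Theorem~\ref{t:Ball-Metriz}. The only subtlety worth flagging is that conditions (iii) and (iv) do \emph{not} name a $k$-space hypothesis explicitly, so one has to remember to invoke \v{C}ech-completeness to supply it before applying the relevant clause of Theorem~\ref{t:Ball-Metriz}.
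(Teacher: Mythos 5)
Your proposal is correct and follows essentially the same route as the paper, which derives the corollary immediately from Theorem~\ref{t:Ball-Metriz} using exactly the two facts you cite (\v{C}ech-complete $\Rightarrow$ $k$-space, and metrizable $+$ \v{C}ech-complete $+$ separable $\Rightarrow$ Polish). The cyclic bookkeeping you lay out is precisely the intended argument.
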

Note that the \v{C}ech-completeness of $B_w$ in (iii) cannot be replaced by the weaker condition that every closed subset of $B_w$ is a Baire space (as in Theorem \ref{t:E-W}(4)) even if $E_w$ is an $\aleph_0$-space, see Example \ref{exa:l1-Aleph0-Baire}.

The paper is organized as follows. Section \ref{sec:1} deals with the (strong) Pytkeev property and the fan tightness. Applying these concepts we prove that any  \mbox{$(DF)$-space} $E$ is normable if and only if $E$ has countable fan tightness,  as well as the strong dual $F$ of a strict $(LF)$-space has a countable fan tightness if and only if $F$ is metrizable. Hence the space $D'(\Omega)$ of distributions over an open subset $\Omega\subset\mathbb{R}^{n}$ does not have countable fan tightness although it has countable tightness.  Theorems \ref{t:Weak-Pytkeev} and \ref{t:Ball-Pytkeev} are proved in Section \ref{sec:11}, and in Section \ref{sec:2} we prove Theorem \ref{t:Ball-Metriz}. Also we provide some examples.

\section{The strong Pytkeev property and the fan tightness in lcs} \label{sec:1}

Below we provide a simple proof of  the following result to keep the paper self-contained.
\begin{proposition}[\cite{ Banakh1}] \label{p:FirstCount}  The following assertions are equivalent for a topological space $X$:
\begin{itemize}
\item[{\rm (i)}] $X$  is first countable.
\item[{\rm (ii)}] $X$ has the strong  Pytkeev property and countable fan tightness.
\end{itemize}
\end{proposition}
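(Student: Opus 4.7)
The plan is to handle the two implications separately: (i) $\Rightarrow$ (ii) is routine, while (ii) $\Rightarrow$ (i) is the substantive direction.

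For (i) $\Rightarrow$ (ii), I would fix $x \in X$ and a decreasing countable neighborhood base $\{U_n : n \in \NN\}$ at $x$. The family $\DD := \{U_n : n \in \NN\}$ witnesses the strong Pytkeev property: given an open neighborhood $U$ of $x$, pick $n$ with $U_n \subset U$; for $A \subset X$ with $x \in \overline{A} \setminus A$, first countability produces a sequence $(a_k)$ in $A$ with $a_k \in U_k$ converging to $x$, so $\{a_k : k \geq n\} \subset U_n \cap A$ is infinite. Countable fan tightness follows analogously: given $x \in \bigcap_n \overline{A_n}$, choose $a_n \in A_n \cap U_n$ and set $F_n := \{a_n\}$; then $(a_n) \to x$, so $x \in \overline{\bigcup_n F_n}$.

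For (ii) $\Rightarrow$ (i), let $\DD = \{D_n : n \in \NN\}$ witness the strong Pytkeev property at $x$. After replacing each $D_n$ by $\overline{D_n}$ and closing $\DD$ under finite unions (both preserving the defining property), one may assume every $D_n$ is closed and $\DD$ is closed under finite unions. The proposed countable neighborhood base at $x$ is
\[
\mathcal{B} := \{X \setminus D : D \in \DD, \ x \notin D\},
\]
a countable family of open neighborhoods of $x$. The plan is to show $\mathcal{B}$ is a base at $x$ by contradiction: if some open neighborhood $W$ of $x$ contains no member of $\mathcal{B}$, then $X \setminus W \not\subset D$ for every $D \in \DD$ with $x \notin D$, and the goal is to upgrade this into a failure of the strong Pytkeev property of $\DD$ via countable fan tightness.

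The contradiction step is engineered as follows. Enumerate the members $D \in \DD$ with $x \in \overline{D}$ as $\{D^{(n)} : n \in \NN\}$, and set $A_n := D^{(n)} \cap W \setminus \{x\}$; each such $A_n$ satisfies $x \in \overline{A_n}$ whenever $x$ is not isolated in $D^{(n)}$ (indices where $x$ is isolated contribute nothing and are discarded). Countable fan tightness produces finite $F_n \subset A_n$ with $x \in \overline{\bigcup_n F_n}$; setting $S := \bigcup_n F_n$ gives a countable subset of $W$ with $x \in \overline{S} \setminus S$. Applying strong Pytkeev to $A = S$ and $U = W$ yields some $D^* \in \DD$ with $D^* \subset W$ and $D^* \cap S$ infinite, and a coordinated diagonal refinement of the choices of $F_n$ against the enumeration of $\DD$ inside $W$ should force $D^*$ to conflict with the hypothesis $X \setminus W \not\subset D$ for $D \in \DD$ with $x \notin D$. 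The main obstacle is precisely this diagonal coordination: constructing the $F_n$'s so that every candidate $D^* \subset W$ in $\DD$ eventually avoids $F_n$, while preserving $x \in \overline{\bigcup_n F_n}$ — the delicate interplay that countable fan tightness is tailor-made to handle.
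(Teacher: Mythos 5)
Your direction (i)$\Rightarrow$(ii) is fine (the paper treats it as immediate), but (ii)$\Rightarrow$(i) has genuine gaps. The most basic one: the candidate base $\mathcal{B}=\{X\setminus D : D\in\DD,\ x\notin D\}$ is the wrong object. A witnessing family for the strong Pytkeev property may consist entirely of sets containing $x$ --- for instance $\{(-1/n,1/n):n\in\NN\}$ witnesses the property at $0$ in $\mathbb{R}$ --- and then $\mathcal{B}$ is empty, so it cannot be a neighborhood base even in a space that \emph{is} first countable. (A side problem: replacing each $D$ by $\overline{D}$ does \emph{not} preserve the witnessing property, since $D\subset U$ need not give $\overline{D}\subset U$.) The correct candidate, which is what the paper uses, is the family of \emph{finite unions} of members of $\DD$: one shows that for every open $U\ni x$ some finite union of members of $\DD$ lying inside $U$ is already a neighborhood of $x$.

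The second gap is the one you yourself flag: the ``diagonal coordination'' is exactly the heart of the proof, and your choice $A_n:=D^{(n)}\cap W\setminus\{x\}$ does not set it up, because nothing forces a candidate $D^*\subset W$ to eventually avoid the $F_n$'s. The paper's choice makes this automatic. Suppose, for contradiction, that no finite union of the sets $N_i\in\DD$ with $N_i\subset W$ is a neighborhood of $x$. Then $x\in\overline{X\setminus\bigcup_{i\le n}N_i}$ for every $n$, so countable fan tightness yields finite sets $F_n\subset X\setminus\bigcup_{i\le n}N_i$ with $x\in\overline{\bigcup_n F_n}$. Now \emph{every} $N_i\subset W$ satisfies $N_i\cap F_n=\emptyset$ for all $n\ge i$, hence meets $A:=\bigcup_n F_n$ only inside the finite set $\bigcup_{n<i}F_n$ --- no further engineering needed --- contradicting the strong Pytkeev property applied to $A$ and $U=W$. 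So the missing idea is to feed fan tightness the complements of the \emph{partial unions} of the candidate sets, rather than the candidate sets themselves.
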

\begin{proof}
Let $\mathcal N_0$ be a family witnessing the strong Pytkeev property  at $x\in X$.
We claim that $\mathcal N:=\{\bigcup \mathcal N' : \mathcal N' \in [\mathcal N_0]^{<\mathbb{N}}\}$
is a local base at $x$. If not, there exists open $ U \ni x$ such that no element of $\mathcal N$
contained in $U$ is a neighborhood of $x$. Let $\{N_i:i\in\mathbb{N}\}$ be the enumeration of all elements of
$\mathcal N_0$ which are subsets of $U$. It follows from the above that $x$ lies in the closure
of $B_n:= X\setminus\bigcup_{i\leq n} N_i$ for all $n$, and hence we can select a finite
subset $A_n$ of $B_n$ such that $\bigcup_{n\in\mathbb{N}}A_n$ has $x$ in its closure.
But this is a contradiction because obviously no $N_i$ can have infinite intersection with
$\bigcup_{n\in\mathbb{N}}A_n$.
\end{proof}

In \cite[Question 5]{GKL2} we ask whether there exists a lcs $E$ such that its dual $E'$ has uncountable algebraic dimension and $E_w$ has the strong Pytkeev property.  Theorem \ref{t:Weak-Fan-tight} and Proposition \ref{p:FirstCount} immediately imply a negative answer to this question for any metrizable lcs $E$.
\begin{corollary} \label{c:Pytkeev-Metriz}
Let $E$ be a metrizable lcs. Then $E_w$ has the strong Pytkeev property if and only if $E$ is finite-dimensional.
\end{corollary}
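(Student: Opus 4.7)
The plan is to chain the two results sitting immediately above the corollary. The reverse implication is trivial: if $\dim E<\infty$, then $E_w$ coincides with the norm topology on $E$, which is metrizable, and every metric space has the strong Pytkeev property (at a point $x$ the countable family of open balls of rational radius around $x$ witnesses it).

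For the forward implication, suppose $E_w$ has the strong Pytkeev property. Since $E$ is a metrizable lcs, Theorem \ref{t:Weak-Fan-tight} gives that $E_w$ also has countable fan tightness. Both hypotheses of Proposition \ref{p:FirstCount} are then in force, so $E_w$ is first countable.

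The final step is to deduce finite-dimensionality of $E$ from first countability of $E_w$. I would argue this as follows: pick a countable base $\{V_n\}_{n\in\mathbb{N}}$ at $0$ in $E_w$; by definition of the weak topology each $V_n$ contains a basic weak $0$-neighborhood $U_n$ of the form $\{x:|f_{n,1}(x)|,\ldots,|f_{n,k_n}(x)|<\varepsilon_n\}$ for some finite family $\{f_{n,i}\}_{i=1}^{k_n}\subseteq E'$. For every $g\in E'$ the set $\{x:|g(x)|<1\}$ is a weak neighborhood of $0$, hence contains some $U_n$; the usual kernel argument (from $\bigcap_i\ker f_{n,i}\subseteq\ker g$) then forces $g\in\mathrm{span}\{f_{n,1},\ldots,f_{n,k_n}\}$, so $E'$ has countable algebraic dimension. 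The remaining task---which I anticipate as the main obstacle---is to leverage the metrizability of $E$ to upgrade countable algebraic dimension of $E'$ to finite-dimensionality of $E$; here a Baire-category style argument on $E$, together with the fact that each basic weak neighborhood of $0$ contains a finite-codimensional linear subspace, should close the gap.
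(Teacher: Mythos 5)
Your reduction is exactly the paper's route: the authors obtain the corollary by combining Theorem \ref{t:Weak-Fan-tight} (countable fan tightness of $E_w$ for metrizable $E$) with Proposition \ref{p:FirstCount} to conclude that $E_w$ is first countable, precisely as you do, and your kernel argument showing that first countability of $E_w$ forces $E'$ to have at most countable algebraic dimension is correct and standard.

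The step you flag as ``the remaining task'' is, however, a genuine gap, and the repair you sketch cannot close it. No Baire-category argument on $E$ will upgrade ``$E'$ has countable algebraic dimension'' to ``$E$ is finite-dimensional'', because that implication is false for metrizable lcs: take $E=\mathbb{R}^{\mathbb{N}}$ with the product topology. This is an infinite-dimensional Fr\'echet space whose dual is the countably-dimensional space of finitely supported sequences, so $\sigma(E,E')$ coincides with the (metrizable) product topology; hence $E_w$ is first countable and in particular has the strong Pytkeev property, even though $E$ is infinite-dimensional. So the corollary as literally stated cannot be proved; what the chain of implications actually yields for a metrizable lcs is only that $E'$ has at most countable algebraic dimension, which is all the authors need to answer Question 5 of \cite{GKL2} negatively. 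The conclusion ``finite-dimensional'' is recovered when $E$ is a \emph{normed} space, and there the Baire-category argument must be applied to the dual rather than to $E$: in that case $E'$ is a Banach space, and an infinite-dimensional Banach space cannot be the union of a countable increasing chain of finite-dimensional (hence closed and nowhere dense) subspaces, so countable algebraic dimension of $E'$ forces $\dim E'<\infty$ and therefore $\dim E<\infty$. You should either restrict the statement to normed spaces and run this argument on $E'$, or weaken the conclusion to ``$E'$ has at most countable algebraic dimension''.
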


In \cite{GKL2} we provided large classes of lcs having the strong Pytkeev property including an important class of $(DF)$-spaces. Recall that a lcs $E$ is a $(DF)$-space if $E$ has a fundamental sequence of bounded sets and every bounded set in the strong dual $(E',\beta(E',E))$ of $E$ which is the countable union of equicontinuous sets is itself equicontinuous. The strong  dual $(E',\beta(E',E))$ of a metrizable lcs is a $(DF)$-space, see   \cite[Theorem 8.3.8]{bonet}. Below we use the following result.
\begin{fact}[\cite{GKL2}] \label{quasi}
A $(DF)$-space $E$ has countable tightness if and only if $E$ has the strong Pytkeev property.
\end{fact}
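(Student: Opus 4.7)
The direction ``strong Pytkeev property $\Rightarrow$ countable tightness'' will be immediate from the implications already recorded in the introduction: the strong Pytkeev property implies the Pytkeev property, which in turn implies countable tightness. The substance of the fact is therefore in the converse, which I plan to prove as follows.

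Suppose $E$ is a $(DF)$-space with countable tightness. Translation invariance of the locally convex topology lets me focus on the single point $0\in E$, so the task is to construct a countable family $\mathcal{D}\subseteq 2^E$ witnessing the strong Pytkeev property at $0$. I would fix an increasing fundamental sequence $(B_n)_{n\in\NN}$ of absolutely convex closed bounded sets in $E$; then $(E',\beta(E',E))$ is a Fr\'echet space with canonical decreasing $0$-neighborhood base $(B_n^\circ)_{n\in\NN}$. The key structural input, where both the $(DF)$-axiom and the countable tightness hypothesis enter, is that each bounded set $B_n$ is metrizable in the topology inherited from $E$ (dually, each equicontinuous polar $U^\circ\subseteq E'$ is metrizable in the weak$^\ast$ topology $\sigma(E',E)$). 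Granting this, I would fix for each $n$ a countable base $\mathcal{U}_n$ of the relative topology of $B_n$ at $0$ and set $\mathcal{D}:=\bigcup_n \mathcal{U}_n$, which is countable.

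To verify that $\mathcal{D}$ works, I would take a $0$-neighborhood $V$ in $E$ and a set $A\subseteq E$ with $0\in\overline{A}\setminus A$. Countable tightness reduces the argument to a countable $A$. Next I would show that $A$ meets some $B_n$ in an infinite set $A'$ with $0\in\overline{A'}$: if not, enumerating $A=\{a_k:k\in\NN\}$ with each $a_k$ outside ever larger $B_n$'s, one diagonalizes against the Fr\'echet base $(B_n^\circ)$ of $E'_\beta$ to select finitely many functionals whose associated basic $0$-neighborhood in $E$ separates $0$ from cofinitely many points of $A$, contradicting $0\in\overline{A}$. Once such an $A'\subseteq B_n$ is found, the metrizability of $B_n$ together with the choice of $\mathcal{U}_n$ directly yields some $U\in\mathcal{U}_n$ contained in $V\cap B_n$ with $U\cap A'$ infinite, completing the proof.

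The main obstacle will be the metrizability claim for each bounded set $B_n$ under countable tightness. This is a duality result of Cascales--Orihuela flavor, and essentially all the nontrivial content of the fact is concentrated here; the subsequent extraction of a bounded infinite subset of $A$ and the assembly of the Pytkeev network from the countable bases $\mathcal{U}_n$ are bookkeeping by comparison.
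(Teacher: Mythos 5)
The statement you are proving is not actually proved in this paper: it is imported verbatim from \cite{GKL2} as a Fact, so the only comparison available is with the argument in that reference, which is organized quite differently from yours. Your easy direction (strong Pytkeev $\Rightarrow$ Pytkeev $\Rightarrow$ countable tightness) is fine. The converse, however, contains a fatal gap at the reduction step: you need that for every $A$ with $0\in\overline{A}\setminus A$ there is some $n$ with $0\in\overline{A\cap B_n}$ (you write ``$A$ meets some $B_n$ in an infinite set $A'$ with $0\in\overline{A'}$''). This is the property usually called \emph{bounded tightness}, and it fails for non-normable $(DF)$-spaces of countable tightness. Take $E=\varphi=\bigoplus_{n\in\NN}\mathbb{R}$ with the finest locally convex topology: this is the strong dual of $\mathbb{R}^{\NN}$, hence a barrelled $(DF)$-space, and it is an $\aleph_0$-space (a countable union of finite-dimensional compact metrizable sets capturing all compacta), hence countably tight. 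Put $A=\{m^{-1}e_1+n^{-1}e_{m+1}: m,n\in\NN\}$. Then $0\in\overline{A}\setminus A$: given a basic neighbourhood $\{x:\sum_i|x_i|/c_i\le 1\}$, first pick $m$ with $1/(mc_1)\le 1/2$ and then $n$ with $1/(nc_{m+1})\le 1/2$. But every bounded $B\subseteq\varphi$ lies in a finite-dimensional subspace, and there the trace $A\cap B$ accumulates only at the points $m^{-1}e_1\neq 0$; hence $0\notin\overline{A\cap B}$ for every bounded $B$. Consequently your family $\DD=\bigcup_n\mathcal{U}_n$, whose members are relative neighbourhoods of $0$ inside single bounded sets, cannot witness the strong Pytkeev property for this $A$: any $D\in\mathcal{U}_N$ that fits inside a small enough neighbourhood of $0$ meets $A$ in a finite set. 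Your diagonalization can at best show that $A\cap B_n$ is infinite for some $n$ (true in this example), which is strictly weaker than what the remainder of your argument requires.

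Two further remarks. The metrizability of bounded sets of a $(DF)$-space, which you identify as carrying ``essentially all the nontrivial content,'' is in fact classical (Grothendieck; see P\'erez Carreras--Bonet) and needs no tightness hypothesis, so the difficulty is not located where you place it. The actual proof in \cite{GKL2} passes through the Cascales--K\c{a}kol--Saxon theorem that a $(DF)$-space of countable tightness is quasibarrelled, and then constructs a countable Pytkeev-type network at $0$ from the duality with the Fr\'echet strong dual --- its members are sets that are small in finitely many ``directions'' and unbounded in the remaining ones, not subsets of bounded sets. The example above shows that some construction of this global kind is unavoidable, so the gap is not a repairable detail of bookkeeping but a wrong architectural choice.
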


Since a $(DF)$-space is normable if and only if it is metrizable, Fact \ref{quasi} and Proposition \ref{p:FirstCount} imply
\begin{corollary}\label{fre}
A $(DF)$-space $E$ is normable if and only if $E$ has countable fan tightness.
\end{corollary}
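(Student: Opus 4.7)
The plan is a short chain of implications built from results already established in the paper. For the forward direction, suppose $E$ is a normable $(DF)$-space. Then $E$ is a normed space, hence metrizable, hence first countable. It is elementary that a first countable space has countable fan tightness: pick a decreasing neighborhood base $\{U_n : n\in\NN\}$ at a point $x$; given sets $A_n$ with $x\in\bigcap_n\overline{A_n}$, choose $x_n\in A_n\cap U_n$ and set $F_n=\{x_n\}$. Then $(x_n)\to x$, so $x\in\overline{\bigcup_n F_n}$.

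For the converse, assume the $(DF)$-space $E$ has countable fan tightness. Since fan tightness trivially implies countable tightness (as noted in the Introduction), Fact \ref{quasi} provides the strong Pytkeev property. Combining the strong Pytkeev property with countable fan tightness and invoking the nontrivial direction of Proposition \ref{p:FirstCount}, we conclude that $E$ is first countable. A first countable locally convex space is metrizable (a countable neighborhood base at $0$ consisting of balanced absolutely convex sets yields a countable family of seminorms generating the topology). Finally, the classical fact recalled immediately before the corollary tells us that a metrizable $(DF)$-space is normable, which completes the proof.

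I expect no serious obstacle: the whole argument is a recombination of Fact \ref{quasi}, Proposition \ref{p:FirstCount}, and the classical coincidence of metrizability and normability inside the class of $(DF)$-spaces. The only point requiring any attention is the bookkeeping around Proposition \ref{p:FirstCount}: one must feed countable \emph{tightness} (not fan tightness) into Fact \ref{quasi} to recover the strong Pytkeev property, and then pair the latter with the fan tightness hypothesis to upgrade to first countability.
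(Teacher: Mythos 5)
Your proof is correct and follows essentially the same route as the paper: the paper derives the corollary from exactly the combination of Fact \ref{quasi}, Proposition \ref{p:FirstCount}, and the classical equivalence of normability and metrizability for $(DF)$-spaces. The only cosmetic difference is that you verify directly that first countability implies countable fan tightness instead of citing Proposition \ref{p:FirstCount} for that direction as well.
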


We use Fact \ref{quasi} also to prove the following result which supplements \cite[Theorem 3(iii)]{GKL2}.
\begin{proposition}\label{p:dist}
Let $E$ be a strict $(LF)$-space and $E':=(E',\beta(E',E))$ its strong dual. Then $E'$ has strong Pytkeev property if and only if $E'$ has countable tightness.
\end{proposition}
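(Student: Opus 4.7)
The forward direction is immediate from the standard chain of implications strong Pytkeev $\Rightarrow$ Pytkeev $\Rightarrow$ countable tightness.

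For the converse, I would write $E = \mathrm{ind}_n E_n$ as a strict inductive limit of Fr\'echet spaces. By the Dieudonn\'e--Schwartz theorem, every bounded subset of $E$ is contained and bounded in some step $E_n$. Consequently, the strong topology $\beta(E',E)$ coincides with the initial topology induced by the restriction maps $\rho_n\colon E' \to E_n' := (E_n',\beta(E_n',E_n))$, and so $E'$ realises topologically the projective limit $\varprojlim_n E_n'$ and canonically embeds as a closed subspace of the countable product $\prod_n E_n'$. Each step $E_n'$, being the strong dual of a Fr\'echet space, is a $(DF)$-space by \cite[Theorem 8.3.8]{bonet}. Moreover each $\rho_n$ is a continuous open surjection: surjectivity follows from the Hahn--Banach theorem, and openness is a classical consequence of strictness (a polar-calculus argument using Dieudonn\'e--Schwartz).

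Assume now that $E'$ has countable tightness. Continuous open surjections preserve countable tightness, so countable tightness passes to each $E_n'$, and Fact~\ref{quasi} then promotes this to the strong Pytkeev property in every $(DF)$-space $E_n'$. For each $n$ fix a countable family $\mathcal{D}_n$ witnessing strong Pytkeev at $0 \in E_n'$ and set
\[
\mathcal{D} := \{\rho_n^{-1}(D) : n \in \omega,\ D \in \mathcal{D}_n\}.
\]
Given a basic neighborhood $U = \rho_N^{-1}(V_N)$ of $0$ in $E'$ and any $A \subseteq E'$ with $0 \in \overline{A}\setminus A$, one uses that $\bigcap_n \ker\rho_n = \{0\}$, together with the countable tightness of $E'$, to select an index $m \geq N$ for which $0$ is an accumulation point of $\rho_m(A)$ in $E_m'$. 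Applying the strong Pytkeev property at $0 \in E_m'$ to $\rho_m(A)\setminus\{0\}$ and a small neighborhood of $0$ contained in $r_{Nm}^{-1}(V_N)$ (with $r_{Nm}\colon E_m' \to E_N'$ the canonical link) yields $D \in \mathcal{D}_m$ such that $\rho_m^{-1}(D) \subseteq U$ and $\rho_m^{-1}(D) \cap A$ is infinite.

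The main technical obstacle is this final selection of $m$: when the least indices at which elements of $A$ become non-zero under the $\rho_n$ tend to infinity, no single $m$ captures enough of $A$, and one must either enrich $\mathcal{D}$ by suitable diagonal sets indexed by strictly increasing sequences, or argue via the closure of the strong Pytkeev property under countable products of $(DF)$-spaces and closed subspaces, applied to the embedding $E'\hookrightarrow\prod_n E_n'$.
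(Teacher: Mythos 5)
Your reduction to the steps is exactly the paper's: write $E=\mathrm{ind}_n E_n$, identify $(E',\beta(E',E))$ with the projective limit of the $(DF)$-spaces $(E_n',\beta(E_n',E_n))$ with continuous open surjections $\rho_n$ onto each step (the paper cites \cite{horvath}), push countable tightness down to each $(E_n')_\beta$ through these open maps (the paper cites \cite[Proposition 3]{Arch-Pon}), and invoke Fact~\ref{quasi} to get the strong Pytkeev property on every step. Up to this point you and the paper coincide.

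The divergence is in the last step, and the obstacle you flag in your final paragraph is a genuine gap in the pullback construction, not a cosmetic one. Take $A=\{a_k\}_{k\in\NN}$ with $a_k\to 0$ in $E'_\beta$, $a_k|_{E_k}=0$ and $a_k\ne 0$; then for every fixed $m$ the set $\rho_m(A)\setminus\{0\}$ is finite, so no index $m$ as in your selection exists, and a family consisting only of sets of the form $\rho_n^{-1}(D)$ with $D\in\DD_n$ cannot in general witness the strong Pytkeev property at $0$. The paper avoids this entirely by taking precisely the second route you mention in passing: the strong Pytkeev property is preserved by countable products (this is the result it cites from \cite{GK-GMS1}) and is inherited by arbitrary subspaces (restrict a witnessing family to the subspace), so the topological embedding of $E'_\beta$ into $\prod_n (E_n')_\beta$ immediately yields the conclusion. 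So your proposal contains the correct proof, but only as an aside; you should promote that remark to the actual argument and discard the direct construction of $\DD$, which cannot be repaired without enlarging the family by genuinely new (e.g.\ diagonal) sets.
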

\begin{proof}
Assume that $E'$ has countable tightness.  Let $E$ be a strict inductive limit of a sequence $\{ E_{n}\}_{n}$ of Fr\'{e}chet spaces. For each $n\in\mathbb{N}$, the strong dual $(E'_n)_{\beta}$ of $E_{n}$ is a complete $(DF)$-space. Since $E$ is the strict inductive limit, the space $E'_{\beta}$ is linearly homeomorphic with the \textit{projective
limit} of the sequence $\{(E'_{n})_{\beta}\}_{n}$ of $(DF)$-spaces, and moreover,  $E'_{\beta}$ can be continuously mapped onto each $(E'_{n})_{\beta}$ by an open mapping, see \cite{horvath}. Since $E'$ has countable tightness, for every $n\in\NN$, the quotient space $(E'_{n})_{\beta}$ has countable tightness by \cite[Proposition 3]{Arch-Pon}. Hence all  spaces $(E'_{n})_{\beta}$ have the strong Pytkeev property by Fact \ref{quasi}. Therefore the product $\prod_{n}(E'_n)_{\beta}$ also has the strong Pytkeev property by \cite{GK-GMS1}. Consequently  $E'$ has the strong Pytkeev property.
\end{proof}
Propositions \ref{p:FirstCount}  and \ref{p:dist} imply
\begin{corollary}
The strong dual $F$ of a strict $(LF)$-space has  countable fan tightness if and only if $F$ is metrizable.
\end{corollary}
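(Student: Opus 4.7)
The plan is to chain together Propositions \ref{p:FirstCount} and \ref{p:dist}, adding a small observation at each end. The only ingredient needed beyond these two propositions is the classical Birkhoff--Kakutani fact that every first countable Hausdorff topological vector space is metrizable; everything else is a short verification.

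For the nontrivial direction, assume $F$ has countable fan tightness. First, observe that countable fan tightness trivially implies countable tightness: given $x\in\overline{A}$, apply the fan tightness condition with $A_n := A$ for all $n$ to obtain finite sets $F_n\subset A$ with $x\in\overline{\bigcup_n F_n}$, and the countable set $\bigcup_n F_n\subset A$ witnesses countable tightness at $x$. Proposition \ref{p:dist} then guarantees that $F$ has the strong Pytkeev property. Combining this with the assumed countable fan tightness, Proposition \ref{p:FirstCount} gives that $F$ is first countable. Since $F$ is a Hausdorff topological vector space, Birkhoff--Kakutani yields that $F$ is metrizable.

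For the converse, if $F$ is metrizable then it is first countable, and I would verify directly that first countability implies countable fan tightness. Fix $x\in F$ with a decreasing countable neighbourhood base $\{U_n\}_{n\in\NN}$ at $x$, and suppose $x\in\bigcap_n\overline{A_n}$. Choose $a_n\in A_n\cap U_n$ and set $F_n:=\{a_n\}$; since $a_n\to x$, we have $x\in\overline{\bigcup_n F_n}$ as required.

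No step presents a genuine obstacle: the substantial work has already been done in Proposition \ref{p:dist}, which via Fact \ref{quasi} pushes the countable tightness of $F$ through the projective limit structure to the strong Pytkeev property, and in Proposition \ref{p:FirstCount}, which combines the strong Pytkeev property with fan tightness to give a countable base. The corollary simply bundles these together with Birkhoff--Kakutani.
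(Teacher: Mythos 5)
Your proposal is correct and follows exactly the route the paper intends: the paper derives this corollary by combining Proposition \ref{p:FirstCount} with Proposition \ref{p:dist}, and you have simply made explicit the intermediate steps (fan tightness $\Rightarrow$ countable tightness $\Rightarrow$ strong Pytkeev $\Rightarrow$ first countable $\Rightarrow$ metrizable via Birkhoff--Kakutani, plus the easy converse).
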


We end this section with the next two examples.
\begin{example} {\em
If $\Omega\subset\mathbb{R}^{n}$ is  an open set, then the space of test functions $\mathfrak{D}(\Omega)$ is a complete Montel $(LF)$-space. As usually, $\mathfrak{D}'(\Omega)$ denotes its strong dual,  the space of distributions. We proved in \cite{GKL2} that $\mathfrak{D}'(\Omega)$ has the strong Pytkeev property, but it is not a $k$-space. Now Proposition \ref{p:FirstCount} implies that $\mathfrak{D}'(\Omega)$  does not have countable fan tightness. }
\end{example}

\begin{example} {\em
Let $E:=C_{c}(X)$ be any Fr\'echet lcs which is not normed (for example $E:=C_{c}(\mathbb{R})$). Then $(E',\beta(E',E))$ does not have countable fan tightness although it has countable tightness. Indeed, since $C_c(X)$  is quasi-normable, it is  distinguished by \cite[Corollary 1]{bierstedt2}, and hence  the space $E'_\beta := (E',\beta(E',E))$ is quasibarrelled. Then, by  \cite[Theorem 12.3]{kak}, the space $E'_\beta$ has countable tightness. Finally, Corollary \ref{fre} implies that $E'_\beta$ does not have countable fan tightness.}
\end{example}

\section{Proofs of Theorems \ref{t:Weak-Pytkeev} and  \ref{t:Ball-Pytkeev}} \label{sec:11}

Denote by $S_{E}$ the unit sphere  of a normed space $E$.

\begin{lemma} \label{lemma:Funct-Seq}
Let $\{ A_n\}_{n\in\NN}$ be a sequence of unbounded subsets of a normed space $E$. Then there is $\chi\in S_E$ such that
\[
A_n\setminus \{ x\in E: |\langle\chi, x\rangle| <1/2\} \not=\emptyset, \mbox{ for every $n\in\NN$.}
\]
\end{lemma}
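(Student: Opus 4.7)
The plan is to apply the Baire Category Theorem to the dual unit ball $B_{E'}$ equipped with the weak-$\ast$ topology, which is compact Hausdorff by Banach--Alaoglu and hence a Baire space. For each $n\in\NN$, define
\[
F_n = \bigl\{\chi\in B_{E'} : |\chi(a)|\le 1/2 \text{ for all } a\in A_n\bigr\},
\]
a weak-$\ast$ closed subset of $B_{E'}$. If each $F_n$ is shown to be nowhere dense, Baire's theorem will produce some $\chi_0\in B_{E'}\setminus\bigcup_n F_n$, which is nonzero (as $0\in F_n$ for every $n$), so the normalization $\chi:=\chi_0/\|\chi_0\|\in S_{E'}$ will satisfy $|\chi(a)|\ge|\chi_0(a)|>1/2$ for some $a\in A_n$ and every $n$, as required.

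To verify that $F_n$ has empty weak-$\ast$ interior, I would argue by contradiction: suppose some basic neighborhood $U=\{\chi\in B_{E'}:|\chi(x_i)-\psi(x_i)|<\e,\ i=1,\ldots,k\}$ lies in $F_n$. Replacing $\psi$ by $(1-\delta)\psi\in U$ for small $\delta>0$, we may assume $\|\psi\|<1$; set $F=\mathrm{span}(x_1,\ldots,x_k)$. For every $\phi\in F^\perp$ with $\|\phi\|\le 1-\|\psi\|$, the element $\psi+\phi$ lies in $U\subset F_n$, whence $|\phi(a)|\le|\psi(a)|+1/2\le 1$ for all $a\in A_n$. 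Taking the supremum and using the Hahn--Banach identity $\sup\{|\phi(a)|:\phi\in F^\perp,\|\phi\|\le 1\}=d(a,F)$, one obtains $d(a,F)\le C:=(1-\|\psi\|)^{-1}$ for every $a\in A_n$. Choosing $a_m\in A_n$ with $\|a_m\|\to\infty$ and decomposing $a_m=f_m+g_m$ with $f_m\in F$ and $\|g_m\|\le C+1$, we have $\|f_m\|\to\infty$; compactness of the unit sphere $S_F$ of the finite-dimensional space $F$ lets us pass to a subsequence with $v_m:=f_m/\|f_m\|\to v\in S_F$, and from $|\psi(a_m)|\le 1/2$ together with boundedness of $\psi(g_m)$ one deduces $\psi(v)=\lim_m\psi(f_m)/\|f_m\|=0$.

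The main obstacle, and the heart of the argument, is the final perturbation step exploiting this hidden direction $v$. Assuming the $x_i$ linearly independent (WLOG), write $v=\sum_i c_i x_i$ and pick $i_0$ with $c_{i_0}\ne 0$; by Hahn--Banach there is $\phi\in E'$ with $\phi(x_{i_0})=\e/2$ and $\phi(x_i)=0$ for $i\ne i_0$, so that $\phi(v)=c_{i_0}\e/2\ne 0$. For sufficiently small $t>0$, $\chi:=\psi+t\phi$ stays in $U$, yet
\[
\chi(a_m)=\psi(a_m)+t\|f_m\|\phi(v_m)+t\phi(g_m)
\]
diverges in absolute value along the subsequence because $\phi(v_m)\to\phi(v)\ne 0$ while $\|f_m\|\to\infty$, contradicting $\chi\in F_n$. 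The subtlety is that although $\psi$ vanishes on the limiting direction $v$, the freedom to perturb $\chi$ within $U$ (slightly away from the constraints at the $x_i$) produces a functional that does not, and the amplification by the norms $\|f_m\|\to\infty$ then breaks the defining bound of $F_n$.
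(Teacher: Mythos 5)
Your proof is correct, but it follows a genuinely different route from the paper. The paper argues by a direct induction: it selects points $a_n\in A_n$ and functionals $\chi_n\in S_{E'}$ satisfying $|\langle\chi_n,a_i\rangle|>1$ for all $i\le n$ (splitting into the case where $\chi_n$ is already unbounded on $A_{n+1}$ and the case where one must pass to a convex combination $\lambda\chi_n+(1-\lambda)\eta$ with a functional $\eta$ unbounded on $A_{n+1}$), and then takes a weak$^\ast$ cluster point of $(\chi_n)$ in $B_{E'}$. You instead observe that each set $F_n$ of functionals in $B_{E'}$ that are uniformly $\le 1/2$ on $A_n$ is weak$^\ast$ closed, and reduce the lemma to showing $F_n$ is nowhere dense, invoking Baire on the compact space $(B_{E'},w^\ast)$; the real work is your nowhere-density argument via the duality formula $d(a,F)=\sup\{|\phi(a)|:\phi\in F^{\perp},\,\|\phi\|\le1\}$ and the final perturbation $\psi\mapsto\psi+t\phi$, all of which I checked and which is sound (the only degenerate case, $F=\{0\}$, terminates even earlier since then $d(a,F)=\|a\|\le C$ already contradicts unboundedness of $A_n$). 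The paper's construction is more elementary and self-contained, using weak$^\ast$ compactness only to extract a cluster point; yours decouples the sets $A_n$ from one another, avoids the bookkeeping of the induction, and actually proves the stronger statement that the set of functionals witnessing the conclusion is residual in $B_{E'}$. Both arguments ultimately rest on the same analytic fact that an unbounded set cannot be uniformly close to a finite-dimensional subspace (equivalently, admits a functional unbounded on it).
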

\begin{proof}
First we prove the following claim.

{\bf Claim}. {\em  There is a  sequence $S=\{ a_n\}_{n\in \NN}\subset E$, where $a_n\in A_n$ for every $n\in \NN$,  and a sequence $T=\{ \chi_n\}_{n\in\NN} \subset S_{E'}$ such that}
\begin{equation} \label{equation-1}
|\langle \chi_n, a_i\rangle| >1, \; \mbox{ for every } 1\leq i\leq n <\infty.
\end{equation}

We build $S$ and $T$ by induction. For $n=1$ take arbitrarily $a_1\in A_1$ such that $\| a_1\| >1$ (note that $A_1$ is unbounded) and $\chi_1\in S_{E'}$ such that $|\langle \chi_1, a_1\rangle|=\| a_1\|$. Assume that for $n\in\NN$, we found  $a_i\in A_i$ and $\chi_k\in  S_{E'}$ such that
\begin{equation} \label{equation-2}
|\langle \chi_k, a_i\rangle| >1, \; \mbox{ for every } 1\leq i\leq k\leq n.
\end{equation}
We distinguish between two cases.

{\em Case 1}. The set $\langle \chi_n, A_{n+1}\rangle$ is an unbounded subset of $\IR$. Then we choose $a_{n+1}\in A_{n+1}$ such that $|\langle \chi_n, a_{n+1}\rangle|>1$ and set $\chi_{n+1}:=\chi_n$. It is clear that (\ref{equation-2}) holds also for $n+1$.

{\em Case 2}.  $N:= \sup\{ |\langle \chi_n, a\rangle |: a\in A_{n+1}\} <\infty$.
Since $A_{n+1}$ is unbounded, there is $\eta\in S_E$ such that $\langle \eta, A_{n+1}\rangle$ is an unbounded subset of $\IR$. Choose $0<\lambda <1$ such that
\begin{equation}  \label{equation-3}
\lambda |\langle \chi_n, a_i\rangle| - (1-\lambda)|\langle \eta, a_i\rangle| >1, \; \mbox{ for every } 1\leq i\leq n,
\end{equation}
and choose $a_{n+1}\in A_{n+1}$ such that
\begin{equation}  \label{equation-4}
(1-\lambda)|\langle \eta, a_{n+1}\rangle| >N+1.
\end{equation}
Set $\xi := \lambda \chi_n + (1-\lambda)\eta$. Then by (\ref{equation-3}), we have
\begin{equation}  \label{equation-5}
|\langle \xi, a_i\rangle|\geq \lambda |\langle \chi_n, a_i\rangle| - (1-\lambda)|\langle \eta, a_i\rangle| >1, \; \mbox{ for every } 1\leq i\leq n,
\end{equation}
and, by (\ref{equation-4}),
\begin{equation}  \label{equation-6}
|\langle \xi, a_{n+1}\rangle|\geq (1-\lambda)|\langle \eta, a_{n+1}\rangle| - \lambda |\langle \chi_n, a_{n+1}\rangle| >1.
\end{equation}
Finally we set $\chi_{n+1} := \xi/\|\xi\|\in S_{E'}$. Since  $\| \xi\|\leq 1$, (\ref{equation-5}) and (\ref{equation-6}) imply that $\chi_{n+1}$ and $a_{n+1}$ satisfy (\ref{equation-2}). The claim is proved.

Since the unit ball $B_{E'}$ of the dual $E'$ is compact in the weak$^\ast$ topology, we can find a cluster point $\chi \in B_{E'}$  of the sequence $T$ defined in Claim.  In particular, for every $i\in\NN$ there is $n>i$ such that $|\langle \chi_n, a_i\rangle - \langle \chi, a_i\rangle| <1/2$. Then (\ref{equation-1}) implies
\begin{equation} \label{equation-7}
|\langle \chi, a_i\rangle| >1/2 , \; \mbox{ for every } i\in\NN.
\end{equation}
Now (\ref{equation-7}) implies that  $a_n \in A_n \setminus \{ x\in E: |\langle \chi, x\rangle|< 1/2\}$ for every $n\in\NN$, which  proves the lemma.
\end{proof}

We are at position to prove Theorem \ref{t:Weak-Pytkeev}.

\begin{proof} [Proof of Theorem \ref{t:Weak-Pytkeev}]
Assume towards a contradiction that there is an infinite dimensional normed space $E$ such that $E_w$ has the Pytkeev property.

{\bf Claim}. {\em For every subset $A\subset E_w$ with $0\in\overline{A}\setminus A$, there is a bounded subset $D$ of $A$ such that $0\in\overline{D}$.}

Indeed, suppose that there is a subset $A$ of $E_w$ with $0\in\overline{A}\setminus A$ and such that $0\not\in \overline{A\cap nB}$ for every $n\in\NN$. So  $0\in \bigcap_{n\in\NN} \overline{A\setminus nB}$.  Since $E_w$ has countable fan tightness (see Theorem \ref{t:Weak-Fan-tight}), there are finite subsets $F_n \subset A\setminus nB$ such that $0\in \overline{\cup_{n\in\NN} F_n}$. Set $F:=\bigcup_{n\in\NN} F_n$. Fix arbitrarily sequence $A_1, A_2, \dots $ of  infinite subsets of $F$.

By the construction of $F$, all sets $A_n$ are unbounded. Lemma \ref{lemma:Funct-Seq} implies that  there is a weakly open neighborhood $U$ of zero such that $A_n\setminus U$ is not empty for every $n\in\NN$. Thus $E_w$ does not have the Pytkeev property. This contradiction proves the claim.

Now Theorem 14.3 of \cite{kak} and Claim  imply that $E_w$ is a Fr\'{e}chet-Urysohn space, and hence, by Theorem \ref{t:Weak-k-space}, $E$ is finite dimensional. This contradiction shows that $E_w$ does not have the Pytkeev property.
\end{proof}

To prove Theorem \ref{t:Ball-Pytkeev} we define the following subset of $S_{\ell_1}$
\begin{equation}\label{equ:L1-A}
\begin{split}
A:=\{ & a=\big(a(i)\big)\in S_{\ell_1} | \\
 & \exists m,n\in\NN : a(m)=-a(n)=1/2,  \mbox{ and }  a(i) =0 \mbox{ otherwise} \}.
\end{split}
\end{equation}
\begin{lemma}
$0\in \overline{A}$.
\end{lemma}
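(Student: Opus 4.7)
The plan is to show that every basic weak neighborhood of $0$ in $\ell_1$ meets $A$ by a finite-dimensional pigeonhole argument on the functionals defining the neighborhood.

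First, I would recall that since $(\ell_1)' = \ell_\infty$, a basic weak neighborhood of $0$ has the form
\[
U = \{x\in\ell_1 : |\langle f_k, x\rangle| < \e,\ k=1,\dots,K\}
\]
for some $f_1,\dots,f_K\in\ell_\infty$ and $\e>0$. For a typical element $a = \tfrac12 e_m - \tfrac12 e_n$ of $A$ (with $m\neq n$ and $e_j$ the standard unit vectors), one has $\langle f_k,a\rangle = \tfrac12(f_k(m)-f_k(n))$. So the task reduces to finding $m\neq n$ in $\NN$ such that
\[
|f_k(m)-f_k(n)| < 2\e \quad\text{for every } k=1,\dots,K.
\]

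To produce such a pair, I would consider the map $\Phi:\NN\to\IR^K$ defined by
\[
\Phi(j) := \bigl(f_1(j),\dots,f_K(j)\bigr).
\]
Since each $f_k\in\ell_\infty$, the image $\Phi(\NN)$ is a bounded (hence relatively compact) subset of $\IR^K$. By the Bolzano--Weierstrass theorem, $\Phi(\NN)$ has an accumulation point in $\IR^K$, so there are infinitely many indices $j$ whose $\Phi$-values lie in any fixed ball of radius $\e$ in the $\ell_\infty^K$-norm. Choosing two distinct such indices $m,n$ gives $\|\Phi(m)-\Phi(n)\|_\infty < 2\e$, which is exactly the required estimate. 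Hence $a := \tfrac12(e_m-e_n)\in A\cap U$, completing the argument.

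There is no real obstacle here; the only thing to be careful about is correctly identifying what a basic weak neighborhood looks like and noting that the dual of $\ell_1$ is $\ell_\infty$ (so each coordinate sequence $(f_k(j))_{j\in\NN}$ is genuinely bounded). The proof then reduces to the elementary observation that any infinite bounded subset of $\IR^K$ has two points within distance $2\e$ of each other.
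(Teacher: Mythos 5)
Your proof is correct and follows essentially the same route as the paper: both reduce the problem to finding $m\neq n$ with $|f_k(m)-f_k(n)|<2\e$ for every $k$, the paper via a sign-stabilization on an infinite set followed by an explicit pigeonhole on a partition of $[0,1]$ into intervals of length $1/N$, you via Bolzano--Weierstrass applied to the bounded $\IR^K$-valued sequence $(\Phi(j))_{j\in\NN}$. (One cosmetic point: apply Bolzano--Weierstrass to the \emph{sequence} rather than to the image set, which could be finite and then has no accumulation point; the conclusion that infinitely many indices land in a ball of radius $\e$ is unaffected.)
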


\begin{proof}
Let $U$ be a neighborhood of $0$ of the canonical form
\[
U=\left\{ x\in\ell_1 : |\langle\chi_k,x\rangle|<\epsilon , \mbox{ where } \chi_k\in S_{\ell_\infty} \mbox{ for }  1\leq k\leq s\right\}.
\]
Let $I$ be  an infinite subset of $\NN$ such that, for every $1\leq k\leq s$, either $\chi_k(i)>0$  for all $i\in I$, or $\chi_k(i)=0$  for all $i\in I$, or $\chi_k(i)<0$  for all $i\in I$. Take a natural number $N> 1/\epsilon$. Since $I$ is infinite, by induction,  one can find $m,n\in\NN$ satisfying the following condition:  for every $1\leq k\leq s$ there is $0< t_k\leq N$ such that
\begin{equation}\label{equ:L1-1}
\frac{t_k -1}{N} \leq \min\big\{ |\chi_k(m)| , |\chi_k(n)| \big\} \leq \max\big\{ |\chi_k(m)| , |\chi_k(n)| \big\} \leq \frac{t_k}{N}.
\end{equation}
Set $a=\big(a(i)\big)\in A$, where $a(m)=-a(n)=1/2$, and $a(i)=0$ otherwise. Then, by the construction of $I$ and (\ref{equ:L1-1}), we obtain  $|\chi_k(a)|<1/N <\epsilon$ for every $1\leq k\leq s$. Thus $a\in U$, and hence $0\in \overline{A}$.
\end{proof}

\begin{proposition}\label{p:Pytkeev-Ball}
The unit ball $B_w$ of  $\ell_1$ in the weak topology does not have the Pytkeev property.
\end{proposition}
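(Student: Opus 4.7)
The plan is to use the set $A\subset S_{\ell_1}$ defined in (\ref{equ:L1-A}) as the witness that $B_w$ fails the Pytkeev property, with $x=0$. Since every element of $A$ has norm $1$ while $0\in\overline{A}$ by the previous lemma, we have $0\in\overline{A}\setminus A$, and $A\subset B$. It therefore suffices to show that for \emph{every} sequence $(A_n)_{n\in\NN}$ of infinite subsets of $A$ there exists a weak neighborhood $U$ of $0$ such that $A_n\not\subset U$ for every $n\in\NN$.

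The key observation is that each $a\in A$ has the form $a=\tfrac12(\pm e_m\mp e_n)$ for some $m\neq n$, so for any $\chi\in\ell_\infty$,
\[
\langle\chi,a\rangle=\tfrac{1}{2}\big(\chi(m)-\chi(n)\big)\cdot(\pm 1).
\]
Thus if we can build $\chi\in\ell_\infty$ with $\|\chi\|_\infty\le 1$ taking values in $\{-1,+1\}$ on a selected countable set of coordinates, together with a choice $a_n\in A_n$ whose two support coordinates receive \emph{opposite} $\chi$-values, then $|\langle\chi,a_n\rangle|=1$ for every $n$. In that case $U=\{x\in\ell_1:|\langle\chi,x\rangle|<1\}$ is a weak neighborhood of $0$ with $a_n\notin U$, so $A_n\not\subset U$ as required.

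The construction of $\chi$ and the $a_n$'s proceeds by recursion on $n$, maintaining a finite set $F_n\subset\NN$ of already colored coordinates together with a partial coloring $\chi:F_n\to\{-1,+1\}$. At step $n$, with $F_{n-1}$ and $\chi\restriction F_{n-1}$ already defined, I distinguish two cases. \textbf{Case A:} Infinitely many elements of $A_n$ have support disjoint from $F_{n-1}$. Pick any such $a_n$, say with support $\{m,l\}$, and set $\chi(m)=1$, $\chi(l)=-1$, adding $m,l$ to obtain $F_n$. \textbf{Case B:} Only finitely many elements of $A_n$ have support disjoint from $F_{n-1}$. Since $F_{n-1}$ is finite, a pigeonhole argument yields some $j\in F_{n-1}$ that appears in the supports of infinitely many elements of $A_n$; among these, infinitely many share a common sign of $a(j)$. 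For any such element the remaining coordinate $l$ is arbitrary in $\NN$, so infinitely many such $l$'s occur and only finitely many lie in $F_{n-1}$. Choose one with $l\notin F_{n-1}$, take the corresponding $a_n$, and set $\chi(l):=-\chi(j)$ and $F_n:=F_{n-1}\cup\{l\}$.

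In either case $\chi(m_n)=-\chi(l_n)$ on the support $\{m_n,l_n\}$ of $a_n$, and the calculation above gives $|\langle\chi,a_n\rangle|=1$. After the recursion, extend $\chi$ to all of $\NN$ by $0$ on uncolored coordinates; then $\chi\in\ell_\infty$ with $\|\chi\|_\infty\le 1$, and the neighborhood $U=\{x\in\ell_1:|\langle\chi,x\rangle|<1\}$ witnesses the failure of the Pytkeev property. The only place requiring care is Case B, where one must verify that the two pigeonhole steps (first choosing $j$, then a good $l\notin F_{n-1}$) together with the freedom to color the fresh coordinate $l$ always permit the extension; this is where the simple combinatorial structure of the supports of elements of $A$ (each a pair with $\pm 1/2$ entries) is essential.
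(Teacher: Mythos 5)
Your proposal is correct and follows essentially the same route as the paper: the same witness set $A$ at the point $0$, a recursive construction of a sign function $\chi\in B_{\ell_\infty}$ that takes opposite values on the two support coordinates of a chosen $a_n\in A_n$, and the resulting neighborhood $U=\{x:|\langle\chi,x\rangle|<c\}$ missing each $a_n$. The only difference is cosmetic: the paper simply picks $a_{n}$ with $\supp(a_{n})\not\subset D_{n-1}$ (possible since only finitely many elements of $A$ have support inside a finite set), which makes your Case B pigeonhole and the ``common sign'' refinement unnecessary, but your version is equally valid.
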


\begin{proof}
It is enough to show that for any sequence $A_1,  A_2,\dots$ of  infinite subsets of the set $A$ defined in (\ref{equ:L1-A}) there is a neighborhood of $0$ which does not contain $A_i$ for every $i\in\NN$.

Fix arbitrarily $a_1\in A_1$. So $a_1(m_1)=-a_1(n_1)=1/2$ for some $m_1,n_1\in\NN$. Set $\chi(m_1)=-\chi(n_1)=1$ and $D_1=\supp(a_1)=\{m_1,n_1\}$. Then
\[
\left| \sum_{i\in D_1} \chi(i) a_1(i)\right| = |\chi(m_1)a_1(m_1) +\chi(n_1)a_1(n_1)|=1.
\]
Since $A_2$ is infinite, we can choose $a_2\in A_2$ such that  \mbox{$\supp(a_2)=\{m_2,n_2\}\not\subset D_1$}. If $\supp(a_2)\cap D_1=\emptyset$, we set $\chi(m_2)=-\chi(n_2)=1$. If $m_2\in D_1$, we set $\chi(n_2)=-\chi(m_2)$, and if $n_2\in D_1$, we set $\chi(m_2)=-\chi(n_2)$.  Then
\[
\left| \sum_{i\in D_2} \chi(i) a_2(i)\right| = |\chi(m_2)a_2(m_2) +\chi(n_2)a_2(n_2)|=1.
\]
 Put $D_2=\supp(a_2)\cup D_1$. Suppose we found $a_k\in A_k$ for $1\leq k\leq s$ and defined $D_s =\cup_{k\leq s} \supp(a_k)$ and $\chi(i)=\pm 1$ for $i\in D_s$ such that
\begin{equation}\label{equ:L1-2}
\left| \sum_{i\in D_s} \chi(i)a_k(i) \right| =1 , \; \mbox{ for every } 1\leq k\leq s.
\end{equation}
Since $A_{s+1}$ is infinite and $D_s$ is finite, we can choose $a_{s+1}\in A_{s+1}$ such that
\[
\supp(a_{s+1})=\{m_{s+1},n_{s+1}\}\not\subset D_s.
\]
If $\supp(a_{s+1})\cap D_s=\emptyset$, we set $\chi(m_{s+1})=-\chi(n_{s+1})=1$. If $m_{s+1}\in D_s$, we set $\chi(n_{s+1})=-\chi(m_{s+1})$, and if $n_{s+1}\in D_s$, we set $\chi(m_{s+1})=-\chi(n_{s+1})$.  So $|\chi(m_{s+1})a_{s+1}(m_{s+1}) +\chi(n_{s+1})a_{s+1}(n_{s+1})|=1$. Put $D_{s+1}=\supp(a_{s+1})\cup D_s$. In particular, (\ref{equ:L1-2}) holds also for $s+1$.
Put $D=\cup_{s\in\NN} D_s$.

Set $\chi=(\chi(i))\in S_{\ell_\infty}$, where $\chi(i)=\chi(n_s)$ if $i=n_s$,  $\chi(i)=\chi(m_s)$ if $i=m_s$ for some $s\in\NN$, and $\chi(i)=0$ if $i\not\in D$. Then  (\ref{equ:L1-2}) implies that $|\chi(a_s)|=1$ for every $s\in\NN$. Finally we set $U=\{ x\in \ell_1 : |\langle\chi,x\rangle|<1/2\}$. Then $a_s \in A_s \setminus U$ for every $s\in\NN$. Thus $B_w$ does not have the Pytkeev property.
\end{proof}

Now we are ready to prove Theorem \ref{t:Ball-Pytkeev}.

\begin{proof}[Proof of Theorem \ref{t:Ball-Pytkeev}]
If $B_w$ has the Pytkeev property, then $E$ contains no isomorphic copy of $\ell_1$ by Proposition \ref{p:Pytkeev-Ball}. The converse assertion follows from Theorem \ref{t:S-W-1}.
\end{proof}

\section{Proof of Theorem \ref{t:Ball-Metriz}} \label{sec:2}

We need the following fact  which is similar to Proposition 7.7 of \cite{Mich}.
\begin{proposition} \label{p:Sequence-Aleph}
Assume that a regular space $X$ is covered by an increasing sequence $\{ A_n\}_{n\in\NN}$ of closed subsets.
\begin{enumerate}
\item[{\rm (i)}] If all $A_n$ are $\aleph$-spaces (resp. an $\aleph_0$-spaces), and if each compact $K\subset X$ is covered by some $A_n$, then $X$ is an $\aleph$-space (resp. an $\aleph_0$-space, respectively).
\item[{\rm (ii)}] If all $A_n$ have countable  $cs^\ast$-character and  each convergent sequence $S\subset X$ is covered by some $A_n$, then $X$ has countable  $cs^\ast$-character.
\end{enumerate}
\end{proposition}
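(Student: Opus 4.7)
The plan is to lift the relevant networks (respectively $cs^\ast$-families) from each closed subspace $A_n$ to the whole space $X$ and verify that the desired property is preserved. In both parts, the key property of the $A_n$ we exploit is that they are \emph{closed} in $X$; this lets us move local-finiteness and topological information back and forth between $A_n$ and $X$.

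For part (i), pick in each $A_n$ a $\sigma$-locally finite $k$-network $\mathcal{N}_n=\bigcup_m \mathcal{N}_{n,m}$ (with $\mathcal{N}_{n,m}$ locally finite in $A_n$). I would first observe that every family $\FF$ of subsets of $A_n$ which is locally finite in $A_n$ remains locally finite in $X$: for a point $x\notin A_n$ the open set $X\setminus A_n$ meets no member of $\FF$, while for $x\in A_n$ any neighborhood witnessing local finiteness in $A_n$ has the form $U\cap A_n$ for some open $U\subset X$, and an open set $U\subset X$ meets a subset of $A_n$ iff $U\cap A_n$ does. Hence $\mathcal{N}:=\bigcup_{n,m}\mathcal{N}_{n,m}$ is $\sigma$-locally finite in $X$ (and countable in the $\aleph_0$ case). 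To verify that $\mathcal{N}$ is a $k$-network, given a compact $K\subset X$ contained in an open $U\subset X$, use the hypothesis to find $n$ with $K\subset A_n$; then $U\cap A_n$ is open in $A_n$ and $K$ is compact there, so finitely many members of $\mathcal{N}_n$ cover $K$ and lie in $U\cap A_n\subset U$.

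For part (ii), fix $x\in X$ and let $n_0$ be the least $n$ with $x\in A_{n_0}$; then $x\in A_n$ for all $n\geq n_0$ because the sequence $(A_n)$ is increasing. For every such $n$ pick a countable family $\DD_n$ of subsets of $A_n$ witnessing countable $cs^\ast$-character at $x$ in $A_n$, and put $\DD:=\bigcup_{n\geq n_0}\DD_n$, which is countable. Given any sequence $S=(x_k)$ in $X$ converging to $x$ and any neighborhood $U$ of $x$ in $X$, the hypothesis gives some $n\geq n_0$ with $S\subset A_n$; since $x\in A_n$, the sequence $(x_k)$ also converges to $x$ in the subspace topology of $A_n$, and $U\cap A_n$ is an open neighborhood of $x$ there. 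Applying the $cs^\ast$-character property of $\DD_n$ in $A_n$ yields $D\in\DD_n\subset\DD$ with $D\subset U\cap A_n\subset U$ that contains infinitely many of the $x_k$, as required.

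There is no truly hard step; the only delicate point is the passage from local finiteness in the subspace $A_n$ to local finiteness in $X$, which is the reason the statement requires the $A_n$ to be closed. Everything else is a matter of choosing the index $n$ large enough to simultaneously contain $x$ and the relevant compact set or convergent sequence, and then using the subspace topology of $A_n$.
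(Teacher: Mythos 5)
Your proof is correct and follows essentially the same route as the paper's: lift the $k$-networks (resp.\ $cs^\ast$-networks at $x$) from the closed pieces $A_n$, take the countable union, and use the covering hypothesis to pick an $n$ large enough to contain the given compact set or convergent sequence. The only difference is that you spell out the preservation of local finiteness from a closed subspace to the whole space, which the paper dismisses as ``easy to see.''
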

\begin{proof}
(i): Assume that all $A_n$ are $\aleph$-spaces and let $\DD_n =\bigcup_{k\in\NN} \DD_{n,k}$ be a $\sigma$-locally finite $k$-network for $A_k$. It is easy to see that the family
$
\DD := \bigcup_{n\in\NN}\bigcup_{k\in\NN} \DD_{n,k}
$
is $\sigma$-locally finite in $X$ and covers $X$. If $K\subset U$ with $K$ compact and $U$ open in $X$, take $A_n$ such that $K\subset A_n$ and choose a finite subfamily $\FF \subset \DD_n \subset \DD$ such that $K\subset \bigcup\FF \subset A_n \cap U \subset U$. This means that $\DD$ is a $k$-network for $X$, and hence $X$ is an $\aleph$-space. The case $A_n$ are $\aleph_0$-spaces is proved analogously.

(ii): Fix $x\in X$, an open neighborhood $U$ of $x$ and a sequence $S=\{ s_n\}_{n\in \NN}$ converging to $x$. Set $I(x):=\{ n\in\NN: x\in A_n\}$ and let $\DD_n (x)$ be a countable $cs^\ast$-network at $x$ in $A_n, n\in I(x)$. Set $\DD(x) =\bigcup \{ \DD_n(x): n\in I(x)\}$. Take $n\in I(x)$ such that $S\subset A_n$. Then, by definition, we can find $D\in \DD_n(x) \subset \DD(x)$ such that $x\in D\subset U$ and $\{ n\in \NN : s_n\in D\}$ is infinite. Thus $\DD(x)$ is a countable $cs^\ast$-network at $x$.
\end{proof}

\begin{corollary}
Let $A$ be a closed subset of a tvs $E$.
\begin{enumerate}
\item[{\rm (i)}] If every compact subset $K\subset E$  is contained in some $nA$, then $E$ is an $\aleph$-space (resp. an $\aleph_0$-space) if and only if so is $A$.
\item[{\rm (ii)}] If every convergent sequence $S\subset E$ is contained in some $nA$, then $E$ has countable $cs^\ast$-character  if and only if  so does $A$.
\end{enumerate}
\end{corollary}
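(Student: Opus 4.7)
The plan is to reduce both parts of the Corollary to Proposition~\ref{p:Sequence-Aleph} applied to the increasing sequence
$A_n := \bigcup_{k=1}^{n} kA$, $n\in\NN$. The ``only if'' direction is routine because each of the three properties (being an $\aleph$-space, being an $\aleph_0$-space, and having countable $cs^\ast$-character) is inherited by subspaces: given a $\sigma$-locally finite $k$-network, respectively a countable $cs^\ast$-network at a point, restrict every set in it to $A$.

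For the ``if'' direction, I would first note that scalar multiplication by $k\neq 0$ is a homeomorphism of the tvs $E$, so each $kA$ is closed in $E$ and homeomorphic to $A$; consequently $kA$ inherits whichever property $A$ is assumed to have. The sets $A_n$ are then closed as finite unions of closed sets, the sequence is increasing, and since every singleton in $E$ is compact (and a convergent sequence), the hypothesis of (i) or (ii) yields $\bigcup_{n\in\NN}A_n=E$. It remains to check that each $A_n$ itself has the relevant property; this is the standard stability of $\aleph$-spaces, $\aleph_0$-spaces and countable $cs^\ast$-character under finite unions of closed subspaces (combine the $\sigma$-locally finite $k$-networks of the $kA$ for $k\le n$; at a point $x\in A_n$, union the countable $cs^\ast$-networks of those $kA$, $k\le n$, that contain $x$). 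Finally, the standing hypothesis of (i), respectively (ii), says exactly that every compact subset, respectively every convergent sequence, of $E$ is contained in some $nA$ and hence in $A_n$. Proposition~\ref{p:Sequence-Aleph}(i), respectively (ii), then delivers both conclusions.

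The argument presents essentially no obstacle; the only point that requires brief verification is the stability of the three properties under finite unions of closed subspaces. The passage from $nA$ to $A_n=\bigcup_{k\le n}kA$ is needed only because in a general tvs one cannot guarantee $nA\subset(n+1)A$ without extra assumptions on $A$ (such as convexity together with $0\in A$), and Proposition~\ref{p:Sequence-Aleph} requires an increasing sequence.
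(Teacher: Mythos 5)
Your argument is correct and follows the same route as the paper: both reduce the statement to Proposition \ref{p:Sequence-Aleph}, using that $E=\bigcup_{n\in\NN}nA$ (singletons being compact, resp.\ convergent sequences) and that each $nA$ is closed and homeomorphic to $A$. The paper simply applies the proposition to the sequence $\{nA\}_{n\in\NN}$ without addressing that it need not be increasing (the proof of the proposition never actually uses monotonicity), whereas you restore the stated hypothesis by passing to $\bigcup_{k\le n}kA$ at the price of the routine finite-union stability check you identify; either way the argument goes through.
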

\begin{proof}
Clearly $E=\bigcup_{n\in\NN} nA$. Since $A$ and $nA$ are topologically isomorphic for every $n\in\NN$, the assertion follows from Proposition \ref{p:Sequence-Aleph}.
\end{proof}
Since any weakly compact subset of a Banach space is contained in $nB$ for some $n\in\NN$, 
the last corollary implies (cf. \cite[Theorem 4.2]{S-W})
\begin{corollary} \label{c:Sequence-Aleph}
\begin{enumerate}
\item[{\rm (i)}] A Banach space $E$ is a weakly $\aleph$-space (resp. a weakly $\aleph_0$-space) if and only if so is $B_w$.
\item[{\rm (ii)}] A Banach space $E$ has countable $cs^\ast$-character  if and only if  so does  $B_w$.
\end{enumerate}
\end{corollary}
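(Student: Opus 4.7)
The plan is to obtain Corollary \ref{c:Sequence-Aleph} as a direct application of the preceding corollary, taking the ambient topological vector space to be $E_w$ and the closed subset to be $B$. To do this, I need to verify the two hypotheses of that corollary in this concrete setting: namely that $B$ is weakly closed, and that every weakly compact subset (respectively, every weakly convergent sequence) of $E_w$ is contained in $nB$ for some $n\in\NN$.

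The first point is standard. Since $B$ is norm-closed and convex in the Banach space $E$, the Hahn--Banach separation theorem implies that $B$ is also closed in the weak topology, i.e.\ in $E_w$.

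For the containment hypothesis in part (i), let $K\subset E_w$ be weakly compact. Then $K$ is weakly bounded, and by the uniform boundedness principle (viewing $K$ as a pointwise-bounded family of evaluation functionals on $E'$) it is norm-bounded. Hence $K\subset nB$ for some $n\in\NN$. For part (ii), the same reasoning applies to a weakly convergent sequence $S\subset E_w$: such a sequence is weakly bounded, therefore norm-bounded, and thus $S\subset nB$ for some $n\in\NN$. With both hypotheses in place, (i) of the preceding corollary yields (i) of the present statement (covering both the $\aleph$-space and the $\aleph_0$-space cases simultaneously), and (ii) of the preceding corollary yields (ii).

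I do not anticipate any substantive obstacle: the proof is essentially a two-line invocation of the previous corollary. The only nontrivial ingredient is the implication ``weakly bounded $\Rightarrow$ norm bounded'' furnished by the uniform boundedness principle, which is precisely the bridge that makes the earlier corollary's containment hypotheses applicable with $A=B$.
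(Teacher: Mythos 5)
Your proof is correct and follows exactly the paper's route: the paper also deduces the corollary from the preceding one with $A=B$, citing that every weakly compact subset of a Banach space lies in some $nB$. You merely spell out the standard ingredients (Hahn--Banach for weak closedness of $B$, uniform boundedness for norm-boundedness of weakly compact sets and weakly convergent sequences) that the paper leaves implicit.
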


Any first countable topological group is metrizable, but there are first-countable non-metrizable compact  spaces. However, for the unit ball of Banach spaces we have the following result which supplements Theorem \ref{t:S-W-1} (probably  known but hard to locate).
\begin{proposition}\label{p:Ball-FirstCountable}
Let $E$ be a Banach space. Then $B_w$ is metrizable if and only if it is first countable.
\end{proposition}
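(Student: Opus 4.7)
The plan is to reduce to the fact, cited in the text, that $B_w$ is metrizable if and only if $E'$ is norm separable. The forward direction of the proposition is immediate since metrizable spaces are first countable, so all of the content lies in showing that first countability of $B_w$ at the origin forces $E'$ to be norm separable.

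First I would unpack first countability concretely: given a countable local base at $0$ in $B_w$, I can shrink each member to a basic weakly open neighborhood, producing a countable family $F=\{f_{n,i}: n\in\NN,\ 1\le i\le k_n\}\subset E'$ such that the sets
\[
V_n := B \cap \bigcap_{i\le k_n}\{x\in E : |\langle f_{n,i},x\rangle|<1/n\}
\]
form a local base at $0$ in $B_w$. The goal then becomes to show that the norm-closed linear span of $F$ in $E'$ equals $E'$, since then $E'$ is norm separable (rational combinations of $F$ are dense) and hence $B_w$ is metrizable.

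For the key claim I would argue by contradiction. Assume there is $g \in E'\setminus\overline{\mathrm{span}(F)}^{\|\cdot\|}$. Hahn--Banach applied inside $E'$ supplies $\phi \in E''$ that vanishes on $\overline{\mathrm{span}(F)}^{\|\cdot\|}$ and satisfies $\phi(g)\ne 0$; after rescaling we may assume $\|\phi\|\le 1$. Goldstine's theorem then gives a net $(x_\alpha)\subset B$ converging to $\phi$ in the weak$^*$ topology of $E''$. Thus $\langle f_{n,i},x_\alpha\rangle\to 0$ for every $n$ and $i$, while $\langle g,x_\alpha\rangle\to\phi(g)\ne 0$. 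Consequently $(x_\alpha)$ is eventually contained in every $V_n$, but eventually lies outside the weakly open neighborhood $U:=\{x\in B:|\langle g,x\rangle|<|\phi(g)|/2\}$ of $0$, contradicting the fact that some $V_n$ must be contained in $U$.

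The main obstacle is precisely the passage from the topological information ``$\{V_n\}$ is a local base at $0$'' to a genuine norm-density statement about functionals; this is where the combination of Hahn--Banach (producing a separating bidual element) and Goldstine (realising that element as a weak$^*$-limit from inside $B$) does the essential work, converting a neighborhood-base condition on $B_w$ into a statement about $E'$.
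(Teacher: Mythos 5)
Your proof is correct, but it takes a genuinely different route from the paper. The paper argues via uniformities in the spirit of the Birkhoff--Kakutani theorem: using that $B_w-B_w\subseteq 2B_w$ is homeomorphic to $B_w$, a countable neighborhood base at $0$ in $2B_w$ is converted into a countable base $\{W_n\}$ for the group uniformity restricted to $B$, and then the metrization theorem for uniform spaces with countable bases (Engelking 8.1.21) is invoked. This is purely topological: it never identifies the metric with anything duality-theoretic and does not pass through separability of $E'$. You instead extract from a countable base at $0$ a countable family $F\subset E'$ of defining functionals and show, via Hahn--Banach in $E'$ and Goldstine's theorem, that $\overline{\mathrm{span}(F)}=E'$, so that $E'$ is norm separable; you then quote the classical equivalence between separability of $E'$ and metrizability of $B_w$. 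Your argument is sound (the contradiction is correctly obtained: the Goldstine net is eventually in the particular $V_{n}$ contained in $U$ yet eventually outside $U$), and it buys more in the context of Theorem \ref{t:Ball-Metriz}, since it directly establishes the implication (ii)$\Rightarrow$(ix) rather than (ii)$\Rightarrow$(i); the trade-off is that it leans on the cited ``well known'' fact for the converse direction and on specifically Banach-space machinery, whereas the paper's uniformity argument is self-contained and adapts to more general bounded symmetric convex sets in topological vector groups.
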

\begin{proof}
Assume that $B_w$ is first countable. Denote by $\mathcal{W}$ the standard group  uniformity
\[
\big\{ (x,y)\in E^2:x-y\in U,\, \mbox{ where } 0\in U\in \sigma(E,E') \big\}
\]
on $E$ generating the topology $\sigma(E,E')$. Since $B_w$ is homeomorphic to the ball $2B_w =B_w -B_w$ of radius $2$, the set $2B_w$ is also first countable. So there exists a sequence $\{ U_n\}_{n\in\NN}\subset \sigma(E,E')$ of open symmetric neighborhoods of $0$, such that $\{U_n \cap 2B_w :n\in\mathbb{N}\}$ is a countable base  at $0$. Let us show that the family $\{W_n:n\in\mathbb{N}\}$, where $W_n=\{(x,y)\in B\times B : x-y\in U_n\}$, is a countable base of the uniformity $\mathcal{W}|_B$.
Indeed, fix arbitrarily $W=\{(x,y)\in B\times B : x-y\in U\} \in \mathcal{W}|_B$ with $0\in U\in \sigma(E,E')$. Choose $n\in\NN$ such that $U_n\cap 2B \subset U\cap 2B$. Then, for every $(x,y)\in W_n$, we have $x-y\in U_n$ and also $x-y\in 2B$. So $x-y\in U_n\cap 2B \subset U\cap 2B$, and hence $(x,y)\in W$.

By Theorem 8.1.21 of \cite{Eng}, the uniformity $\mathcal{W}|_B$ is generated by a metric $\rho$ on $B$. So the topologies $\tau_\rho$ and $\tau_B$ on $B$ induced by the metric $\rho$ and $\mathcal{W}|_B$ coincide (see \cite[4.1.11, 4.2.6 and 8.1.18]{Eng}). Taking into account that $\tau_B$ and $\sigma(E,E')|_B$ also coincide (see \cite[8.1.17]{Eng}), we obtain that $B_w$ is metrizable.  The converse assertion is trivial.
\end{proof}

Recall that a topological space $X$ has the \textit{property} $\left( \alpha_{4}\right) $ if for any $\{x_{m,n}:\left( m,n\right) \in \mathbb{N}\times\mathbb{N}\}\subset X$
 with $\lim_{n}x_{m,n}=x\in X,$ $m\in \mathbb{N},$ there exists a sequence $\left( m_{k}\right) _{k}$ of distinct natural numbers and a sequence $\left( n_{k}\right) _{k}$ of natural numbers such that $\lim_{k}x_{m_{k},n_{k}}=x$.

 The following result is similar to    \cite[Lemma 3.2]{GKKLP} and is essentially a corollary of the proof of \cite[Theorem~4]{nyikos}. However,  the latter is devoted  to  general (i.e., not necessary Hausdorff)  topological groups which makes  big parts of its proof  not relevant for us, and hence we believe that a streamlined proof of the proposition below is still of some value.
\begin{proposition} \label{important}
Any  convex  and Fr\'{e}chet-Urysohn subset $F$
 of a topological vector space (tvs) $E$ has the property $(\alpha_4)$.
\end{proposition}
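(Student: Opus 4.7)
My plan is to translate so that $x = 0 \in F$ (translation in a TVS is a homeomorphism and preserves convexity, so the hypotheses pass to $F - x$), and then mimic the proof of Nyikos's Theorem 4 from \cite{nyikos}, with convex combinations playing the role of the group operations. The point is that since $0 \in F$ and $F$ is convex, whenever $y, z \in F$ and $\lambda \in [0,1]$ we have $\lambda y + (1-\lambda) z \in F$, and if $y_n \to 0$ and $z_n \to 0$ in $F$ then $\lambda y_n + (1-\lambda) z_n \to 0$ in $F$ as well. I would argue by contradiction: assume a matrix $(x_{m,n}) \subseteq F$ with $x_{m,n} \to 0$ as $n \to \infty$ for each $m$ for which no distinct sequence $(m_k)$ and $(n_k)$ gives $x_{m_k, n_k} \to 0$. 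The trivial case in which $x_{m,n} = 0$ occurs in infinitely many rows yields $(\alpha_4)$ directly, so I will assume $x_{m,n} \ne 0$ throughout.

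The first real step is a row-extraction procedure. Setting $F_m = \{x_{m,n} : n \in \NN\}$, one has $0 \in \overline{\bigcup_m F_m}$, so by the Fr\'{e}chet--Urysohn property of $F$ there is a sequence $(x_{m_k, n_k})_k \to 0$. The assumed failure of $(\alpha_4)$ forces the index set $\{m_k : k \in \NN\}$ to be finite, so after a subsequence it equals a single row, say $\bar{m}_1$. Remove $\bar{m}_1$ and apply the Fr\'{e}chet--Urysohn property to $\bigcup_{m \ne \bar{m}_1} F_m$ to obtain a second row $\bar{m}_2 \ne \bar{m}_1$ that also carries a convergent-to-$0$ subsequence. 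Iterating yields an infinite set of distinct rows $\{\bar{m}_j : j \in \NN\}$ and, for each $j$, a sequence $u_{j,k} \in F_{\bar{m}_j}$ with $u_{j,k} \to 0$ as $k \to \infty$.

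The decisive step uses convexity. Consider the set
\[
C = \left\{ \tfrac{1}{2}(u_{j,k} + u_{j',k'}) : j < j', \ k, k' \in \NN \right\} \subseteq F.
\]
By continuity of addition in $E$, for every neighborhood $U$ of $0$ there is a balanced neighborhood $V$ with $V + V \subseteq 2U$; choosing $k, k'$ large so that $u_{j,k}, u_{j',k'} \in V$ places $\tfrac{1}{2}(u_{j,k} + u_{j',k'})$ in $U$, so $0 \in \overline{C}$. By the Fr\'{e}chet--Urysohn property of $F$ there is a sequence $c_\ell = \tfrac{1}{2}(u_{j_\ell, k_\ell} + u_{j'_\ell, k'_\ell}) \to 0$ with $j_\ell < j'_\ell$.

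The main obstacle, and the place where I expect to need Nyikos's trick most explicitly, is the decoding step: midpoint convergence does not by itself imply convergence of either summand (they could cancel). To overcome this I would refine $C$ so as to force one summand to tend to $0$ a priori. Concretely, enumerating a decreasing base of neighborhoods only along the sequences $(u_{j',k'})_{k'}$ (each of which already converges to $0$), one restricts to midpoints in which $u_{j'_\ell, k'_\ell}$ is forced to lie in a prescribed shrinking neighborhood of $0$ depending on $j_\ell$. Then $u_{j'_\ell, k'_\ell} \to 0$, and since $c_\ell \to 0$, the identity $u_{j_\ell, k_\ell} = 2 c_\ell - u_{j'_\ell, k'_\ell}$ forces $u_{j_\ell, k_\ell} \to 0$ in $E$. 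As $u_{j_\ell, k_\ell} = x_{\bar{m}_{j_\ell}, n^{(j_\ell)}_{k_\ell}}$ and the rows $\bar{m}_{j_\ell}$ can be taken distinct (by passing to a subsequence with $j_\ell$ strictly increasing, which is possible because otherwise the refined $C$ would lie in only finitely many rows, contradicting $0 \in \overline{C}$ via the same argument used in step 1), this produces a witness of $(\alpha_4)$ for the original matrix, contradicting the standing assumption.
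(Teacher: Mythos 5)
Your overall strategy---midpoints playing the role of Nyikos's group operation, a Fr\'echet--Urysohn extraction, and then a ``decoding'' of the summands---is the right one, and you correctly identify the decoding as the crux. But your resolution of the decoding does not work. You propose to refine $C$ so that the second summand $u_{j'_\ell,k'_\ell}$ lies in a ``prescribed shrinking neighborhood of $0$ depending on $j_\ell$'' and then to conclude $u_{j'_\ell,k'_\ell}\to 0$ once $j_\ell\to\infty$. For that conclusion the shrinking family $(V_j)_{j\in\NN}$ must be a neighborhood base at $0$, i.e.\ you need first countability---precisely what is unavailable in a general tvs and what the proposition is designed to circumvent (it is applied to $B_w$, which is not first countable a priori). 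Nor can you instead choose, for each row $j'$, a threshold $K(j)$ on $k'$: making the tails of all later rows simultaneously small for \emph{every} neighborhood is essentially the uniformity that $(\alpha_4)$ asserts, so the step is circular. The paper's proof avoids the difficulty entirely by pairing $x_{m,n}$ not with an element of another row of the matrix but with $x_{1,m}$, the $m$-th term of the \emph{first} row: once the Fr\'echet--Urysohn property produces midpoints $\tfrac12(x_{m_k,n_k}+x_{1,m_k})\to x$ with $m_k$ strictly increasing, the second summands $x_{1,m_k}$ converge to $x$ automatically, being a subsequence of a single convergent sequence, and then $x_{m_k,n_k}=2\cdot\tfrac12(x_{m_k,n_k}+x_{1,m_k})-x_{1,m_k}\to x$ with no appeal to any neighborhood base. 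This one change is the missing idea; with it your set $C$ becomes a single-indexed family and the rest of your outline goes through.

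Two further points. First, your ``row-extraction'' step is superfluous: every row of the matrix converges to $0$ by hypothesis, so there is nothing to extract. Second, your argument that the extracted midpoint sequence can be assumed to visit infinitely many distinct rows (``otherwise the refined $C$ would lie in only finitely many rows, contradicting $0\in\overline{C}$'') is a non sequitur: the Fr\'echet--Urysohn property only hands you \emph{some} sequence in $C$ converging to $0$, and that sequence being supported on finitely many rows does not contradict $0\in\overline{C}$. Relatedly, you dispose of the case $x_{m,n}=0$ but not of the case where $0$ is itself one of the midpoints, in which event the extraction can return a trivial sequence. The paper handles both issues by a preliminary case analysis: it treats separately the case where $x$ equals $\tfrac12(x_{m,n}+x_{1,m})$ for infinitely many $m$ (which already yields $(\alpha_4)$), and in the remaining case restricts to rows beyond $\max(I)$ so that $X\subset F\setminus\{x\}$, and then uses $x\ne x_{1,m}$ to show each row index occurs only finitely often in the extracted sequence, so the $m_k$ may be taken strictly increasing. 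Your proof needs an analogous reduction before the extraction step can be exploited.
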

\begin{proof}
Let  $(x_{m,n})_{n\in\NN}$ be a sequence of elements of $F$ convergent to $x\in F$ for all $m\in \NN$. Denote by $J$ the set of all $m\in\NN$ for which the family $\{ n\in\NN : x=x_{m,n}\}$ is nonempty. If $J$ is infinite the assertion  is trivial. If $J$ is finite, without loss of generality  we may additionally assume that $x\neq x_{m,n}$ for all $m,n\in\NN$. Two cases are possible.

(i): The set $I:=\left\{m\in\NN:\exists n(m)\in\NN\: \big(x=1/2(x_{m,n(m)}+x_{1,m})\big)\right\}$ is infinite. Since $x_{m,n(m)}=2x-x_{1,m}$ for all $m\in I$ and $(x_{1,m})_{m\in I}$ converges to $x$, so does the sequence $(x_{m,n(m)})_{m\in I}$.

(ii): The set $I$ defined above is finite, say  $\max(I)=q$. Set
\[
X=\{1/2(x_{m,n}+x_{1,m}) :  n\in \NN, m>q \},
\]
and observe that $X\subset F\setminus\{x\}$ and $x\in\overline{X}$. Since $F$ has the Fr\'{e}chet--Urysohn property, there exists a sequence
$\big(1/2(x_{m_k,n_k}+x_{1,m_k})\big)_{k\in\NN}$ of elements of $X$ converging to $x$. It follows from the assumption $x\not\in\{ x_{1,m}:m\in\NN\}$ that for every $m\in\NN$ there are at most finitely many $k$ such that $m_k=m$, and therefore passing to a subsequence if necessary we may assume that $m_{k+1}>m_k$ for all $k$.
Thus $\big(x_{1,m_k}\big)_{k\in\NN}$ converges to $x$, and hence so does  $\big(x_{m_k,n_k}\big)_{k\in\NN}$.

In any of these cases there exists a sequence which converges to $x$ and meets $(x_{m,n})_{n\in\NN}$ for infinitely many $m$, which completes our proof.
\end{proof}

\begin{corollary}\label{aleph}
A convex  subset $D$ of a tvs $E$ is metrizable if and only if $D$ is both a Fr\'echet-Urysohn space and an $\aleph$-space.
\end{corollary}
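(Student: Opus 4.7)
The ``only if'' direction is immediate, since every metric space is Fr\'echet--Urysohn and admits a $\sigma$-discrete base, in particular a $\sigma$-locally finite $k$-network. For the ``if'' direction, my plan is to combine the three tools already at our disposal: Proposition~\ref{important}, which hands us property $(\alpha_4)$ on the convex Fr\'echet--Urysohn set $D$; the observation recorded in the introduction that every $\aleph$-space has countable $cs^\ast$-character, hence so does $D$; and the argument behind Proposition~\ref{p:Ball-FirstCountable}, which converts first countability of a suitable convex subset of a topological vector space into metrizability via the induced group uniformity. The strategy has two stages: first force $D$ to be first countable, then upgrade to metrizability.

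For first countability, fix $x\in D$ and a countable $cs^\ast$-network $\DD_x=\{D_n\}_{n\in\NN}$ at $x$; passing to finite unions, assume $\DD_x$ is increasing and $x\in D_n$ for every $n$. I claim the subcollection of those $D_n$ which are neighborhoods of $x$ is a local base at $x$, which gives first countability. Otherwise there is a neighborhood $U$ of $x$ such that no $D_n\subset U$ is a neighborhood of $x$; put $I=\{n:D_n\subset U\}$, nonempty by the $cs^\ast$-property applied to the constant sequence at $x$. For every $n\in I$, since $D_n$ is not a neighborhood of $x$ one has $x\in\overline{U\setminus D_n}$, and Fr\'echet--Urysohn provides a sequence $(y_{n,k})_{k\in\NN}\subset U\setminus D_n$ with $y_{n,k}\to x$. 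If $I$ is finite with maximum $m$, then $(y_{m,k})_k$ is a sequence in $U\setminus D_m$ converging to $x$; the $cs^\ast$-property furnishes some $D_{m'}\subset U$ (so $m'\in I$, whence $D_{m'}\subset D_m$) containing infinitely many terms, contradicting $y_{m,k}\notin D_m$. If $I$ is infinite, property $(\alpha_4)$ applied to $\{(y_{n,k})_{k\in\NN}:n\in I\}$ yields a diagonal sequence $a_i=y_{n_i,k_i}\to x$ with distinct $n_i\in I$; the $cs^\ast$-property then picks $D_m\subset U$ (with $m\in I$) containing infinitely many $a_i$. Since the $n_i$ are distinct, only finitely many can fail $n_i>m$, so there exist $i$ with both $a_i\in D_m$ and $n_i>m$, for which $D_m\subset D_{n_i}$ forces $a_i\in D_{n_i}$, contradicting $a_i\notin D_{n_i}$.

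For the upgrade to metrizability, translate so that $0\in D$ and choose a countable decreasing base $\{V_n=U_n\cap D\}_{n\in\NN}$ at $0$ in $D$ with each $U_n$ symmetric open in $E$. By continuity of subtraction in $E$, for every open neighborhood $W$ of $0$ in $E$ one finds an open $V$ with $V-V\subset W$ and then $n$ with $V_n\subset V$, which yields $V_n-V_n\subset W$. Consequently $\{(x,y)\in D\times D:x-y\in V_n-V_n\}_{n\in\NN}$ is a countable base for the group uniformity $\mathcal{W}|_D$ inherited from $E$, so by \cite[8.1.21]{Eng} it is generated by a metric which by \cite[8.1.17]{Eng} induces the subspace topology on $D$---exactly mirroring the endgame of the proof of Proposition~\ref{p:Ball-FirstCountable}. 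The main obstacle is the first-countability step: the increasing structure imposed on $\DD_x$ is precisely what makes the $cs^\ast$-witness $D_m$ and the $(\alpha_4)$-diagonal collide via $n_i>m$, so one must arrange $\DD_x$ this way before invoking Fr\'echet--Urysohn.
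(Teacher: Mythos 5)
Your route is genuinely different from the paper's, which disposes of the ``if'' direction in one line: Proposition \ref{important} gives $(\alpha_4)$, and then \cite[Theorem~2.2]{GKKM} (a Fr\'echet--Urysohn $\aleph$-space with $(\alpha_4)$ is metrizable) finishes. Your self-contained alternative via first countability has two real gaps. The first is the reduction ``passing to finite unions, assume $\DD_x$ is increasing.'' An increasing countable $cs^\ast$-network at $x$ simply cannot exist at any non-isolated point of a $T_1$ space which is the limit of a nontrivial convergent sequence $S$: for a neighborhood $U$ let $w(U)$ be the least $n$ with $D_n\subset U$ and $D_n\cap S$ infinite; if $U'\subset U$ then $w(U)\le w(U')$, so by monotonicity $D_{w(U)}\subset D_{w(U')}\subset U'$, whence $D_{w(U)}\subset\bigcap\{U': x\in U'\ \text{open}\}=\{x\}$, contradicting $|D_{w(U)}\cap S|=\infty$. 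Closing under finite unions only yields a directed family; that rescues your finite case (replace $D_m$ by $\bigcup_{n\in I}D_n$) but not the step ``$n_i>m$ forces $D_m\subset D_{n_i}$'' in the infinite case, which genuinely needs linearity. The repair is exactly the device of Proposition \ref{p:ball-Metriz}: enumerate the network elements contained in $U$ as $N_1,N_2,\dots$ and choose the $m$-th sequence inside $U\setminus\bigcup_{i\le m}N_i$, so that the $(\alpha_4)$-diagonal eventually avoids each fixed $N_i$; the finite unions $\bigcup_{i\le m}N_i$ then form the countable base.

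The second gap is in the metrization step. You verify only one half of the assertion that the sets $W_n=\{(x,y)\in D\times D: x-y\in V_n-V_n\}$ form a base of $\mathcal{W}|_D$, namely that every basic entourage contains some $W_n$. You never show that each $W_n$ is an entourage of $\mathcal{W}|_D$ at all, i.e.\ that $V_n-V_n$ contains $(D-D)\cap W$ for some neighborhood $W$ of $0$ in $E$ --- equivalently, that $\{V_n-V_n\}_{n\in\NN}$ is a neighborhood base at $0$ in $D-D$. This is precisely the point where Proposition \ref{p:Ball-FirstCountable} exploits $B-B=2B$ together with the homeomorphism $B\cong 2B$, and it has no evident analogue for an arbitrary convex $D$: given $x,y\in D$ with $x-y$ small, there is no obvious way to write $x-y=a-b$ with $a,b\in D$ close to $0$. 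Without that half, $\{W_n\}$ may generate a strictly finer uniform topology than $\sigma$-induced one, and the appeal to \cite[8.1.21, 8.1.17]{Eng} does not go through. Note also that your argument uses only countable $cs^\ast$-character, never the full $\aleph$-space hypothesis; the cleanest fix is to use that hypothesis and conclude via \cite[Theorem~2.2]{GKKM}, as the paper does.
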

\begin{proof}
This follows from Proposition \ref{important} and the fact that a topological space which is an $\aleph$-space and Fr\'echet-Urysohn with property $(\alpha_{4})$ is metrizable, see \cite[Theorem 2.2]{GKKM}.
\end{proof}
Next proposition is crucial for the proof of Theorem \ref{t:Ball-Metriz}.
\begin{proposition} \label{p:ball-Metriz}
Let $E$ be a Banach space. If $B_w$  is a $k$-space and has countable $cs^\ast$-character, then it is metrizable.
\end{proposition}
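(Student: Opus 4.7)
The plan is to upgrade $B_w$ all the way to first countable, from which Proposition~\ref{p:Ball-FirstCountable} delivers metrizability. Two of the three needed ingredients come for free: since $B_w$ is a $k$-space, Theorem~\ref{t:S-W-1} tells us that $E$ contains no isomorphic copy of $\ell_{1}$ and therefore $B_w$ is Fr\'{e}chet--Urysohn; and since $B$ is convex, Proposition~\ref{important} endows $B_w$ with the property $(\alpha_4)$. The substance of the proof will be to show that any space which is Fr\'{e}chet--Urysohn, has property $(\alpha_4)$, and has countable $cs^\ast$-character at a point is first countable at that point.

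I will carry this out following the pattern of Proposition~\ref{p:FirstCount}. Fix $x\in B_w$ with a countable $cs^\ast$-network $\mathcal{D}$ at $x$, and set $\mathcal{N}:=\{\bigcup\mathcal{D}':\mathcal{D}'\in [\mathcal{D}]^{<\mathbb{N}}\}$. Suppose towards a contradiction that $\mathcal{N}$ is not a local base at $x$: then there is an open $U\ni x$ such that no element of $\mathcal{N}$ contained in $U$ is a neighborhood of $x$. Enumerate $\{D\in\mathcal{D}:D\subset U\}=\{D_n:n\in\NN\}$ and set $B_n:=\bigcup_{i\leq n}D_i\subset U$. Since $B_n$ is not a neighborhood of $x$, we have $x\in\overline{B_w\setminus B_n}$, and the Fr\'{e}chet--Urysohn property yields a sequence $(x_{n,k})_{k\in\NN}\subset B_w\setminus B_n$ with $x_{n,k}\to x$ as $k\to\infty$. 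Applying $(\alpha_4)$ to the double sequence $(x_{n,k})$ produces distinct naturals $n_j$ and indices $k_j$ such that $x_{n_j,k_j}\to x$. By the countable $cs^\ast$-character, some $D_m\subset U$ contains infinitely many of the terms $x_{n_j,k_j}$; but the distinctness of the $n_j$ forces $n_j\geq m$ for infinitely many $j$, and for every such $j$ we have $x_{n_j,k_j}\in B_w\setminus B_{n_j}\subset B_w\setminus D_m$, a contradiction. Hence $\mathcal{N}$ is a countable local base at $x$, so $B_w$ is first countable and Proposition~\ref{p:Ball-FirstCountable} finishes the job.

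I expect the $(\alpha_4)$-powered diagonalization just sketched to be the main obstacle, since without $(\alpha_4)$ the implication ``Fr\'{e}chet--Urysohn $+$ countable $cs^\ast$-character $\Rightarrow$ first countable'' fails in general (e.g.\ on Arens-type spaces). It is precisely the convexity of $B$, combined with the Fr\'{e}chet--Urysohn property already extracted from the $k$-space hypothesis, that supplies $(\alpha_4)$ via Proposition~\ref{important} and forces the three weak sequential conditions to collapse to honest first countability.
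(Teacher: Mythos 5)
Your proposal is correct and follows essentially the same route as the paper's own proof: both reduce to first countability via Proposition~\ref{p:Ball-FirstCountable}, extract the Fr\'{e}chet--Urysohn property from Theorem~\ref{t:S-W-1}, and run the identical $(\alpha_4)$-diagonalization against a countable $cs^\ast$-network (the paper phrases it as ``some finite union $\bigcup_{i\leq m}N_i$ is a neighborhood of $x$'' rather than forming the family of finite unions up front, but the argument is the same). No gaps.
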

\begin{proof}
In the proof all subspaces $X$ of $E$ are considered with the weak topology $\sigma(E,E')|_X$. By Theorem \ref{t:S-W-1} we can assume that $B_w$ is  Fr\'{e}chet-Urysohn. By Proposition \ref{p:Ball-FirstCountable}, it is enough to prove that $B_w$ is first countable.

Fix $x\in B$, a neighborhood  $U$  of $x$ in $B_w$ and   a countable $cs^\ast$-network $\mathcal{N}$ at $x$ in $B_w$. Let $\{N_i:i\in\mathbb{N}\}$ be an enumeration of all elements of $\mathcal{N}$ which are subsets of $U$.  We claim that there exists $m$ such that $\bigcup_{i\leq m} N_i$ is a neighborhood of $x$ in $B_w$. If not, for every $m\in\NN$ there exists a sequence $\{x_{m,n}:n\in\mathbb{N}\}\subset B\setminus \bigcup_{i\leq m} N_i$ converging to $x$. Since $B_w$ is a bounded subset of $E_w$ we apply  Proposition  \ref{important} to find a sequence $\left( m_{k}\right)_{k}$ of distinct natural numbers and a sequence $\left( n_{k}\right)_{k}$ of natural numbers such that $\lim_{k}x_{m_{k},n_{k}}=x$. It follows that there exists $i\in\mathbb{N}$ such that the intersection $C:=N_i\cap\{x_{m_{k},n_{k}}:k\in\mathbb{N}\}$ is infinite. On the other hand, $x_{m_{k},n_{k}}\not\in N_i$ for all $k> i$  because $m_k> i$, and hence $C$ is finite. This contradiction shows that $\{\bigcup\mathcal N':\mathcal N'\in [\mathcal N]^{<\mathbb{N}}\}$ is a countable base at $x$ in $B_w$.
\end{proof}

Now we prove Theorem \ref{t:Ball-Metriz}. 

\begin{proof}[Proof of Theorem \ref{t:Ball-Metriz}]
(i)$\Rightarrow$(ii)$\Rightarrow$(iii) is clear. (iii)$\Leftrightarrow$(i) follows from
Theorem \ref{t:Weak-Fan-tight}  and Propositions  \ref{p:FirstCount} and \ref{p:Ball-FirstCountable}. The separable case and (i)$\Leftrightarrow$(ix) are well known, see \cite{fabian}. (vii)$\Leftrightarrow$(viii) follows from Theorem \ref{t:S-W-1}.  (i)$\Rightarrow$(iv) is clear, and (iv)$\Rightarrow$(v)  and (vi)$\Rightarrow$(vii) follow from the fact that every $\aleph$-space has countable $cs^\ast$-character (see \cite[Corollary 3.8]{GK-GMS1}). (iv)$\Leftrightarrow$(vi) and (v)$\Leftrightarrow$(vii) follow from Corollary \ref{c:Sequence-Aleph}. Finally, (v)$\Leftrightarrow$(i) follows from Proposition \ref{p:ball-Metriz}.
\end{proof}
The following corollary generalizes Corollary 5.6 of \cite{GKKM}.
\begin{corollary}
Let $E$ be a  Banach space not containing $\ell_1$. Then $E'$ is separable if and only if  $E_w$ has countable $cs^\ast$-character.
\end{corollary}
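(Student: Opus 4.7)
The plan is to observe that this corollary is essentially an immediate reformulation of the equivalence (viii)$\Leftrightarrow$(ix) in Theorem \ref{t:Ball-Metriz}, so no new work beyond citing that theorem is required.

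For the forward implication, assume $E'$ is separable. Then condition (ix) of Theorem \ref{t:Ball-Metriz} holds, and by the equivalence (ix)$\Leftrightarrow$(viii) in that theorem we get that $E_w$ has countable $cs^\ast$-character (and, as a bonus, $E$ contains no isomorphic copy of $\ell_1$). Alternatively, from (ix)$\Leftrightarrow$(i) one sees that $B_w$ is metrizable hence an $\aleph_0$-space, so $B_w$ has countable $cs^\ast$-character; then Corollary \ref{c:Sequence-Aleph}(ii) transfers this to $E_w$.

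For the backward implication, suppose $E_w$ has countable $cs^\ast$-character and $E$ contains no isomorphic copy of $\ell_1$. This is precisely condition (viii) of Theorem \ref{t:Ball-Metriz}, which by that theorem is equivalent to (ix), i.e., $E'$ is separable.

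There is no substantive obstacle here; the corollary simply isolates the implication (viii)$\Leftrightarrow$(ix) from the long list of equivalences, emphasizing that under the hypothesis of not containing $\ell_1$ the $k$-space assumption present in conditions (v) and (vii) is automatic by Theorem \ref{t:S-W-1} (since $B_w$ is then Fr\'echet--Urysohn, hence a $k$-space), which is exactly why one can drop it from the characterization.
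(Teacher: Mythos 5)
Your proof is correct and takes essentially the same route the paper intends: the corollary is stated without a separate proof precisely because it is the equivalence (viii)$\Leftrightarrow$(ix) of Theorem \ref{t:Ball-Metriz} restricted to the class of Banach spaces not containing $\ell_1$. Your reading of both directions, and your remark that the $k$-space hypothesis becomes automatic via Theorem \ref{t:S-W-1}, match the paper's logic exactly.
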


In \cite[Corollary 5.3]{GKKM} we proved that a reflexive Fr\'{e}chet space is a weakly $\aleph$-space if and only if $E'$ is separable. For reflexive Banach spaces we strengthen this result as follows.
\begin{corollary} \label{c:Reflex-Separ}
Let $E$ be an infinite dimension reflexive Banach space. Then $E_w$ does not have the
strong Pytkeev property. Moreover, the following conditions are equivalent: (i) $E_w$ has countable $cs^\ast$-character; (ii) $B_w$ has countable $cs^\ast$-character; (iii) $E'$ is separable;
(iv) $B_w$ is Polish.
\end{corollary}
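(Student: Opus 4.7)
The plan is to reduce everything to results already established in the paper; reflexivity enters the argument in exactly one place, namely to guarantee that $B_w$ is compact (hence automatically a $k$-space).

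For the first assertion (that $E_w$ does not have the strong Pytkeev property), I would simply invoke Corollary \ref{c:Pytkeev-Metriz}. Since $E$ is a Banach space, it is in particular a metrizable lcs, and $E$ is infinite-dimensional by hypothesis; therefore $E_w$ cannot have the strong Pytkeev property.

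For the equivalences (i)--(iv), the pivotal observation is that since $E$ is reflexive, the closed unit ball $B$ is weakly compact (by Kakutani's theorem). Hence $B_w$ is a compact Hausdorff space and, in particular, a $k$-space. The equivalence (i)$\Leftrightarrow$(ii) is then immediate from Corollary \ref{c:Sequence-Aleph}(ii). For (ii)$\Leftrightarrow$(iv), I would argue as follows: by the equivalence (v)$\Leftrightarrow$(i) of Theorem \ref{t:Ball-Metriz}, together with the fact that $B_w$ is already a $k$-space, condition (ii) is equivalent to $B_w$ being metrizable; but a compact metrizable space is automatically Polish, and a Polish space is metrizable, so (ii)$\Leftrightarrow$(iv). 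The equivalence (iii)$\Leftrightarrow$(iv) follows in the same spirit from the equivalence (i)$\Leftrightarrow$(ix) of Theorem \ref{t:Ball-Metriz}: $E'$ separable iff $B_w$ metrizable iff (using compactness) $B_w$ Polish.

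I do not anticipate any substantial obstacle. The only point that deserves a short explanation is why reflexivity is really needed: it is used exclusively to make $B_w$ compact, which (a) supplies the $k$-space hypothesis in the relevant clauses of Theorem \ref{t:Ball-Metriz} for free, and (b) upgrades metrizability of $B_w$ to being Polish automatically, since a compact metric space is complete and separable. Without reflexivity one would only get the weaker chain through Theorem \ref{t:Ball-Metriz} itself, exactly as already recorded in Corollary \ref{c:B-Polosh}.
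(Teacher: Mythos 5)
Your proposal is correct, and its overall skeleton coincides with the paper's: both dispose of the first assertion via Corollary \ref{c:Pytkeev-Metriz} and then reduce the equivalences to Theorem \ref{t:Ball-Metriz} by establishing that $B_w$ automatically satisfies the $k$-space hypothesis and that metrizability of $B_w$ upgrades to Polishness. The one genuine difference is the ingredient you use for that reduction: the paper invokes Theorem~B of Edgar and Wheeler \cite{edgar} to conclude that $B_w$ is \v{C}ech-complete (whence a $k$-space, and whence ``metrizable separable $+$ \v{C}ech-complete $\Rightarrow$ Polish''), whereas you invoke Kakutani's characterization of reflexivity to get the stronger and more classical fact that $B_w$ is weakly compact (whence a $k$-space, and whence ``compact $+$ metrizable $\Rightarrow$ Polish''). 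Your route is the more elementary and self-contained one --- compactness trivially implies \v{C}ech-completeness, so you bypass the external Edgar--Wheeler result entirely --- while the paper's citation of \cite{edgar} fits its broader discussion of \v{C}ech-completeness around Corollary \ref{c:B-Polosh}. Your explicit accounting of the equivalences via Corollary \ref{c:Sequence-Aleph}(ii) and the clauses (v)$\Leftrightarrow$(i) and (i)$\Leftrightarrow$(ix) of Theorem \ref{t:Ball-Metriz} is accurate, and your closing remark correctly isolates where reflexivity is used.
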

\begin{proof}
The space $E_w$ does not have the strong Pytkeev property  by Corollary \ref{c:Pytkeev-Metriz}.
By Theorem B of \cite{edgar},  $B_w$ is \v{C}ech-complete, and
thus the second assertion follows from Theorem \ref{t:Ball-Metriz}
and the fact that any metrizable separable \v{Cech}-complete space is Polish. 
\end{proof}
For example, for $1<p<\infty$, the reflexive space $\ell_p(\Gamma)$ has countable $cs^\ast$-character in the weak topology if and only if $\Gamma$ countable, which  extends Example 3.1 of \cite{GKKM}.
Corollary \ref{c:Reflex-Separ} also shows  that for the strong Pytkeev property there does not exist a result analogous to Corollary  \ref{c:Sequence-Aleph}. Indeed, if $E=\ell_2$, then $B_w$ has the strong Pytkeev property because it is Polish, but $E_w$ does not have the strong Pytkeev property.

Below we consider examples clarifying relations between the notions from Theorems \ref{t:Ball-Metriz} and \ref{t:E-W} and  other notions considered in \cite{edgar}.

\begin{example} \label{exa:l1-Aleph0-Baire}{\em
Let $E=\ell_1$. Then $E$ is a weakly $\aleph_0$-space by \cite[7.10]{Mich} and $B_w$ is almost \v{C}ech-complete \cite[6(9)]{edgar}. So every closed subset of $B_w$ is also almost \v{C}ech-complete, hence Baire (see \cite{AartsLut}). Another argument: The original norm of $E$ has the Kadec-Klee property, i.e. the weak  and the norm topologies coincide on the unit sphere of $E$; now apply \cite[Proposition 12.56]{fabian}.   Since $\ell'_1 =\ell_\infty$ is not separable, $B_w$ is not metrizable, and hence $B_w$ is not a $k$-space by Theorem \ref{t:Ball-Metriz}. In particular, the conditions ``$B_w$ is a $k$-space'' in Theorem \ref{t:S-W-2} and ``$B_w$ is metrizable'' in Theorem \ref{t:E-W} cannot be removed. Note that $E$ has $RNP$ and $(PC)$ but it is not a Godefroy space (see \cite{edgar}).}
\end{example}

\begin{example}{\em
Let $E=c_0$. As $c'_0=\ell_1$ is separable,  $E$ is a weakly $\aleph_0$-space by \cite{GKKLP} and $B_w$ is metrizable (hence a $k$-space). The ball $B_w$ is not \v{C}ech-complete by \cite{edgar}. Note that (see \cite{edgar}) $E$ does not have $RNP$ and $(PC)$ and is not a Godefroy space, but $E$ is Asplund.}
\end{example}


Recall that a topological space $X$ is called \emph{cosmic}, if $X$ is a regular space with a countable network (a family $\mathcal{N}$ of subsets of $X$ is called a \emph{network} in $X$ if, whenever $x\in U$ with $U$ open in $X$, then $x\in N\subset U$ for some $N \in\mathcal{N}$). A space $X$ is called a {\em $\sigma$-space} if it is regular and has a $\sigma$-locally finite network.
\begin{remark}
Let $E$ be the James Tree space. Then $E_w$ is a cosmic space as the continuous image of a separable metrizable space $E$ (see \cite{Mich}). Since $E'$ is not separable, $E'$ has uncountable $cs^\ast$-character. So one cannot replace the countability of $cs^\ast$-character of $E_w$ or $B_w$ in Theorem \ref{t:Ball-Metriz} by $E_w$ or $B_w$ being  cosmic.
\end{remark}

For a Banach space $E$, Corson \cite{Corson} proved that $E_w$ is paracompact if and only if $E_w$ is Lindel\"{o}f, and Reznichenko \cite{Reznich} proved that $E_w$ is Lindel\"{o}f if and only if $E_w$ is normal. On the other hand, we proved in \cite{GK-GMS1} that a topological space $X$ is cosmic (resp. an $\aleph_0$-space) if and only if it is a Lindel\"{o}f $\sigma$-space (resp. a Lindel\"{o}f $\aleph_0$-space). In particular, these results imply
\begin{proposition}
For a Banach space $E$, the following conditions are equivalent: {\em (a)} $E_w$ is an $\aleph_0$-space;  {\em (b)} $E_w$ is a paracompact $\aleph$-space;  {\em (c)} $E_w$ is a Lindel\"{o}f  $\aleph$-space;  {\em (d)} $E_w$ is a normal  $\aleph$-space.
\end{proposition}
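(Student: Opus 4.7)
The plan is to derive the proposition by stitching together the three cited inputs stated immediately before it, with the $\aleph$-space condition carried as a side hypothesis. No new calculation is required; the whole content is a bookkeeping exercise in the chain of equivalences.

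First I would establish (a)$\Leftrightarrow$(c) directly from the characterization of Michael's $\aleph_0$-spaces recalled just above the statement: a topological space is an $\aleph_0$-space if and only if it is a Lindel\"of $\aleph$-space (\cite{GK-GMS1}). Applying this to $X=E_w$ immediately gives the equivalence, without invoking anything special about the Banach space structure.

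Next, for (b)$\Leftrightarrow$(c), I would invoke Corson's theorem, which asserts that for a Banach space $E$ the weak topology $E_w$ is paracompact if and only if it is Lindel\"of. Since the $\aleph$-space hypothesis appears identically on both sides of the equivalence, it may be conjoined with Corson's equivalence to produce precisely (b)$\Leftrightarrow$(c). Analogously, for (c)$\Leftrightarrow$(d), I would invoke Reznichenko's theorem: $E_w$ is Lindel\"of if and only if $E_w$ is normal; conjoining the $\aleph$-space hypothesis on both sides yields (c)$\Leftrightarrow$(d).

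The main (essentially only) thing to verify is that Corson's and Reznichenko's equivalences require no hypothesis beyond the Banach space structure already present, so they may be freely "multiplied" by the $\aleph$-space property. Since every $\aleph$-space is regular by definition, no normality/regularity issue obstructs the application of the cited theorems, and the four conditions form a single cycle of implications through~(c). This completes the proof.
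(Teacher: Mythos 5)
Your argument is exactly the paper's: the proposition is stated there as an immediate consequence of the three cited results (the characterization of $\aleph_0$-spaces as Lindel\"of $\aleph$-spaces from \cite{GK-GMS1}, Corson's paracompact $\Leftrightarrow$ Lindel\"of, and Reznichenko's Lindel\"of $\Leftrightarrow$ normal for $E_w$), conjoined with the $\aleph$-space hypothesis precisely as you describe. The proof is correct and matches the paper's route.
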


\bibliographystyle{amsplain}

\end{document}